\newtheorem{proposition}{Proposition}[section]
\newtheorem{theorem}[proposition]{Theorem}
\newtheorem{lemma}[proposition]{Lemma}
\newtheorem{corollary}[proposition]{Corollary}
\newtheorem{remark}[proposition]{Remark}
\newenvironment{proofof}[1]{\smallskip\noindent{\textbf{Proof~of~#1.}}%
  \hspace{1pt}}{\hspace{-5pt}{\nobreak\quad\nobreak\hfill\nobreak%
    $\square$\vspace{2pt}\par}\smallskip\goodbreak}
\numberwithin{equation}{section}
\renewcommand{\phi}{\varphi}
\renewcommand{\epsilon}{\varepsilon}
\renewcommand{\theta}{\vartheta}
\renewcommand{\L}[1]{\mathbf{L^#1}}
\newcommand{\Lloc}[1]{\mathbf{L^{#1}_{loc}}}
\newcommand{\C}[1]{\mathbf{C^{#1}}}
\newcommand{\BV}{\mathbf{BV}}
\newcommand{\modulo}[1]{{\left|#1\right|}}
\newcommand{\norma}[1]{{\left\|#1\right\|}}
\newcommand{\reali}{{\mathbb{R}}}
\newcommand{\naturali}{{\mathbb{N}}}
\newcommand{\tv}{{\mathop\mathrm{TV}}}
\renewcommand{\O}{\mathinner{\mathcal{O}(1)}}
\newcommand{\Id}{\mathop{\mathbf{Id}}}
\renewcommand{\d}[1]{\mathinner{\mathrm{d}{#1}}}
\newcommand{\diam}{\mathop{\rm{diam}}}
\renewcommand{\hat}[1]{\widehat{#1}}
\newcommand{\caratt}[1]{\chi_{\strut#1}}
\begin{document}

\title{Conservation Laws with\\ Coinciding Smooth Solutions but\\
  Different Conserved Variables}

\author{Rinaldo M. Colombo$^1$ \and Graziano Guerra$^2$}

\footnotetext[1]{INDAM Unit, University of Brescia,
  Italy. \texttt{rinaldo.colombo@unibs.it}}

\footnotetext[2]{Department of Mathematics and its Applications,
  University of Milano - Bicocca,
  Italy. \texttt{graziano.guerra@unimib.it}}

\maketitle

\begin{abstract}
  \noindent Consider two hyperbolic systems of conservation laws in
  one space dimension with the same eigenvalues and (right)
  eigenvectors. We prove that solutions to Cauchy problems with the
  same initial data differ at third order in the total variation of
  the initial datum. As a first application, relying on the classical
  Glimm--Lax result~\cite{GlimmLax}, we obtain estimates improving
  those in~\cite{SaintRaymond} on the distance between solutions to
  the isentropic and non--isentropic inviscid compressible Euler
  equations, under general equations of state. Further applications
  are to the general scalar case, where rather precise estimates are
  obtained, to an approximation by Di Perna of the $p$-system and to a
  traffic model.

  \medskip

  \noindent\textbf{Keywords:} Hyperbolic Conservation Laws; Compressible Euler
  Equations; Isentropic Gas Dynamics

  \medskip

  \noindent\textbf{2010 MSC:} 35L65, 35Q35, 76N99
\end{abstract}

\section{Introduction}
\label{sec:Intro}

Consider the following Cauchy Problems for $n\times n$ systems of
conservation laws in one space dimension:
\begin{equation}
  \label{eq:3}
  \left\{
    \begin{array}{l}
      \partial_t g (u) + \partial_x f (u) = 0
      \\
      u (0,x) = u_o (x)
    \end{array}
  \right.
  \qquad\mbox{ and } \qquad
  \left\{
    \begin{array}{l}
      \partial_t \tilde g(u) + \partial_x \tilde f (u) = 0
      \\
      u (0,x) = u_o (x)
    \end{array}
  \right.
\end{equation}
where we assume that $u_o$ varies in a neighborhood of a fixed state
$\bar u$. Clearly, the condition
\begin{equation}
  \label{eq:10}
  \left(D g (u)\right)^{-1} \; Df (u)
  =
  \left(D \tilde g (u)\right)^{-1} \; D \tilde f (u)
\end{equation}
ensures that the two systems~\eqref{eq:3} share the same smooth
solutions. This paper is devoted to estimate the difference between
possibly non smooth solutions to~\eqref{eq:3} yielded by the Standard
Riemann Semigroups (\cite[Chapter~9]{BressanLectureNotes}) generated
by these systems.

A first classical situation is the following. Consider the usual Euler
equations for an inviscid compressible fluid in one space dimension,
which, in Eulerian coordinates, read
\begin{equation}
  \label{eq:12}
  \left\{
    \begin{array}{l}
      \partial_t \rho + \partial_x (\rho \, v) = 0
      \\
      \partial_t (\rho \, v)
      +
      \partial_x \left(\rho \, v^2 + p(\rho, s)\right) =0
      \\
      \partial_t
      \left( \frac{1}{2} \rho \, v^2 + \rho \, e (\rho, s) \right)
      +
      \partial_x
      \left(
      \left(
      \frac{1}{2} \, \rho \, v^2 + \rho \, e (\rho, s) + p (\rho, s)
      \right) v
      \right)
      = 0 \,,
    \end{array}
  \right.
\end{equation}
see~\cite[Formula~(3.3.29)]{DafermosBook}, where $t$ is time, $x$ is
the space coordinate, $\rho$ is the mass density, $v$ the speed,
$p = p (\rho, s)$ the pressure, $e = e (\rho, s)$ and $s$ are the
internal energy and the entropy densities per unit mass. A standard
approximation of~\eqref{eq:12} is the so called isentropic $p$--system
\begin{equation}
  \label{eq:15}
  \left\{
    \begin{array}{l}
      \partial_t \rho + \partial_x (\rho \, v) = 0
      \\
      \partial_t (\rho \, v)
      +
      \partial_x \left(\rho \, v^2 + p (\rho, \bar s)\right) = 0 \,,
    \end{array}
  \right.
\end{equation}
see~\cite[Formula~(7.1.12)]{DafermosBook}, where $\bar s$ is a
constant entropy density. Below, we provide precise estimates on the
distance between solutions to~\eqref{eq:12} and to~\eqref{eq:15},
improving those in~\cite{SaintRaymond}. This result,
Theorem~\ref{thm:2}, actually provides a comparison between solutions
to~\eqref{eq:12} and to
\begin{equation}
  \label{eq:11}
  \left\{
    \begin{array}{l}
      \partial_t \rho + \partial_x (\rho \, v) = 0
      \\
      \partial_t (\rho \, v)
      +
      \partial_x \left(\rho \, v^2 + p(\rho, s)\right) =0
      \\
      \partial_t (\rho \, s) + \partial_x (\rho \, v \, s) = 0 \,.
    \end{array}
  \right.
\end{equation}
Indeed, assigning an initial datum with entropy $\bar s$ constant in
space to~\eqref{eq:11} leads to solutions that solve~\eqref{eq:15},
too, see Lemma~\ref{lem:smooth}. Note that~\eqref{eq:11} shares the
same smooth solutions with~\eqref{eq:12}.

\smallskip

The formulations of~\eqref{eq:12} and~\eqref{eq:11} motivate our
choice of presenting general systems of conservation laws in the
form~\eqref{eq:3}, rather than in the standard form
\begin{equation}
  \label{eq:13}
  \left\{
    \begin{array}{l}
      \partial_t w + \partial_x F (w) = 0
      \\
      w (0,x) = w_o (x)
    \end{array}
  \right.
  \quad \mbox{ and } \quad
  \left\{
    \begin{array}{l}
      \partial_t \tilde w + \partial_x \tilde F (\tilde w) = 0
      \\
      \tilde w (0,x) = \tilde w_o (x) \,.
    \end{array}
  \right.
\end{equation}
Clearly, the connection between~\eqref{eq:3} and~\eqref{eq:13} is
given by
\begin{equation}
  \label{eq:14}
  \begin{array}{rcl}
    w
    & =
    & g (u)
    \\
    \tilde w
    & =
    & {\tilde g} (u)
  \end{array}
  \quad \mbox{ and }\quad
  \begin{array}{rcl}
    F (w)
    & =
    & f\left(g^{-1} (w)\right) \,,
    \\
    \tilde F (\tilde w)
    & =
    & \tilde f\left({\tilde g^{-1}} (\tilde w)\right) \,.
  \end{array}
\end{equation}
Assume that systems~\eqref{eq:13} generate Standard Riemann
Semigroups, see~\cite[Definition~9.1]{BressanLectureNotes},
$\mathbb{S} \colon \reali_+ \times \mathbb{D} \to \mathbb{D}$ and
$\tilde{\mathbb{S}} \colon \reali_+ \times \tilde{\mathbb{D}} \to
\tilde{\mathbb{D}}$.
The distance between the orbits of ${\mathbb{S}}$ and those of
$\tilde{\mathbb{S}}$ is estimated
in~\cite[Theorem~2.1]{BianchiniColombo}, but only when the physical
meanings of the conserved variables are the same, so that
$\mathbb{D} = \tilde{\mathbb{D}}$. However, $\mathbb{D}$ and
$\tilde{\mathbb{D}}$ may well be entirely different since the physical
conserved variables $w = g (u)$ need not be the same as
$\tilde w = \tilde g (u)$. Therefore, below we aim at the comparison
between the semigroups
\begin{equation}
  \label{eq:21}
  S_t = g^{-1} \circ \mathbb{S}_t \circ g
  \quad \mbox{ and } \quad
  \tilde S_t = {\tilde g}^{-1} \circ \tilde{\mathbb{S}}_t \circ {\tilde g}
\end{equation}
describing the evolutions of the \emph{same} physical variables $u$,
but with different conserved quantities $w$ and $\tilde w$. For
instance, we have $u = (\rho, v, s)$ in both~\eqref{eq:12}
and~\eqref{eq:11}, while the conserved variables in the two cases are
different, since $w = (\rho, \rho \, v, \rho \, s)$ and
$\tilde w = (\rho, \rho \, v, \frac{1}{2} \, \rho \, v^2 + \rho \, e
(\rho, s))$.
Clearly, in this case, a direct comparison between $w$ and $\tilde w$
in~\eqref{eq:13} is inappropriate.

\smallskip

If the initial datum $u_o$ has sufficiently small total variation,
then the weak entropy solutions $S_t u_o$ and $\tilde S_t u_o$
to~\eqref{eq:3} are known to exist for all times. We prove below sharp
estimates (see~3.~in Theorem~\ref{thm:1}) that imply the bound
\begin{equation}
  \label{eq:9}
  \norma{S_t u_o - \tilde S_t u_o}_{\L1 (\reali; \reali)}
  \leq
  C \; \tv (u_o)^3 \, t\,.
\end{equation}
In the case of systems admitting a full set of Riemann coordinates,
the above estimate can be improved, so that only the negative total
variation appears on the right hand side, (see~4.~in
Theorem~\ref{thm:1}).

Above, $C$ is a suitable constant dependent on $Df$, $Dg$,
$D\tilde f$, $D\tilde g$. A rather careful computation allows to
express the leading term in $C$ by means of $g$ and $\tilde g$, see
Proposition~\ref{prop:1}.

\smallskip

As anticipated above, the present general result, applied
to~\eqref{eq:12} and~\eqref{eq:11}, allows to improve the estimate
obtained in~\cite{SaintRaymond} on the distance between solutions to
the general inviscid Euler equations~\eqref{eq:12} and to the
isentropic $p$--system~\eqref{eq:15}.

\smallskip

As a further application, we compare the usual $p$-system in Eulerian
coordinates with the analogous system where speed is conserved, see
Section~\ref{sec:GM}.

\smallskip

A specific paragraph is devoted to the scalar case, where rather
precise estimates are available. Indeed, the lower order terms in the
estimate provided by Theorem~\ref{thm:scalar} are third order in the
total variation of the initial datum with a coefficient depending on
the $\C0$ norm of $f'' \tilde g'' - \tilde f'' g''$. This estimate is
a counterpart to~\cite[Theorem~2.6]{BianchiniColombo}.

\medskip

Section~\ref{sec:Main} presents our general result, while applications
to gas dynamics are considered in Sections~\ref{sec:Euler}
and~\ref{sec:GM}. All technical details are deferred to
Section~\ref{sec:Proofs}.

\section{Main Result}
\label{sec:Main}

For the basic theory of 1D systems of conservation laws, we refer
to~\cite{BressanLectureNotes, DafermosBook, SerreBooks}.

Throughout, we fix an open bounded set $\Omega$ in $\reali^n$, with
$n \in \naturali$, $n\geq 1$. The following assumptions on the
functions defining systems~\eqref{eq:3} are of use in the sequel:
\begin{enumerate}[\bf(H1)]
\item The functions $f, g, \tilde f, \tilde g$ are defined in
  $\Omega$, attain values in $\reali^n$ and are smooth.
  \begin{itemize}
  \item The functions $g$ and $\tilde g$ are invertible and admit
    smooth inverses $g^{-1}$ and $\tilde g^{-1}$.
  \item For $u \in \Omega$, the matrixes
    $A (u) = \left(D g (u)\right)^{-1} \; Df (u)$,
    $\tilde A (u) = \left(D \tilde g (u)\right)^{-1} \; D \tilde f
    (u)$
    admit the real eigenvalues $\lambda_1 (u), \ldots, \lambda_n (u)$,
    $\tilde\lambda_1 (u), \ldots, \tilde\lambda_n (u)$ with
    $\lambda_{i-1} (u) < \lambda_i (u)$,
    $\tilde\lambda_{i-1} (u) < \tilde\lambda_i (u)$ for
    $i=2, \ldots, n$, and the right eigenvectors $r_1 (u)$, $\ldots$,
    $r_n (u)$, $\tilde r_1 (u)$, $\ldots$, $\tilde r_n (u)$.
  \item In both systems, each characteristic field is either genuinely
    nonlinear or linearly degenerate,
    see~\cite[Definition~5.2]{BressanLectureNotes}.
  \end{itemize}
\item For all $u \in \Omega$, \eqref{eq:10} holds, namely
  $A (u) = \tilde A (u)$.
\item The integral curves of the right eigenvectors define a full set
  of Riemann coordinates.
\end{enumerate}

\noindent Below, we choose $D \lambda_i (u) \cdot r_i (u) \geq 0$ and
$D \tilde \lambda_i (u) \cdot \tilde r_i (u) \geq 0$ for
$i = 1, \ldots, n$ and for all $u \in \Omega$. When necessary, a
specific normalization of the $r_i$, respectively $\tilde r_i$,
in~\textbf{(H1)} is adopted and, consequently, a particular
parametrization of the Lax curves is selected. Here and in what
follows, we assume that the left eigenvectors of $A$, namely
$l_1, \ldots , l_n$, are normalized so that
\begin{displaymath}
  l_i (u) \cdot r_j (u) =
  \left\{
    \begin{array}{lcr@{\;}c@{\;}l}
      1
      & \mbox{if }
      &i
      &=
      &j
      \\
      0
      & \mbox{if }
      &i
      &\neq
      &j
    \end{array}
  \right.
\end{displaymath}
with $r_j$ as in~\textbf{(H1)}.  Concerning~\textbf{(H3)}, see also
the definition of \emph{rich} systems in~\cite[\S~5.9]{SerreBooks}.

It is well known, see~\cite[chapters~7 and~8]{BressanLectureNotes},
that under assumption~\textbf{(H1)} and by~\eqref{eq:21} both
systems~\eqref{eq:3} generate Standard Riemann Semigroups (SRS) $S$
and $\tilde S$ defined on domains $\mathcal{D}$ and
$\tilde{\mathcal{D}}$ containing all $\Lloc1$ functions with
sufficiently small total variation.

With reference to the Riemann Problems
\begin{equation}
  \label{eq:22}
  \left\{
    \begin{array}{l}
      \partial_t g (u) + \partial_x f (u) = 0
      \\
      u (0,x) =
      \left\{
      \begin{array}{lcr@{\;}c@{\;}l}
        u^l
        & \mbox{if}
        & x
        & <
        & 0
        \\
        u^r
        & \mbox{if}
        & x
        & >
        & 0
      \end{array}
          \right.
    \end{array}
  \right.
  \quad \mbox{ and } \quad
  \left\{
    \begin{array}{l}
      \partial_t \tilde g (u) + \partial_x \tilde f (u) = 0
      \\
      u (0,x) =
      \left\{
      \begin{array}{lcr@{\;}c@{\;}l}
        u^l
        & \mbox{if}
        & x
        & <
        & 0
        \\
        u^r
        & \mbox{if}
        & x
        & >
        & 0
      \end{array}
          \right.
    \end{array}
  \right.
\end{equation}
introduce the notation, see~\cite[Chapter~7]{BressanLectureNotes},
\begin{equation}
  \label{eq:27}
  \!\!\!\!\!
  \begin{array}{@{}r@{\,}c@{\,}l@{\quad}l@{}}
    \sigma
    & \to
    & \psi_j (\sigma) (u)
    & \mbox{$j$-th Lax curve of system~\eqref{eq:3}, left, exiting $u$, }
      j=1, \ldots, n.
    \\
    \tilde \sigma
    & \to
    & \tilde \psi_j (\tilde \sigma) (u)
    & \mbox{$j$-th Lax curve of system~\eqref{eq:3}, right, exiting $u$, }
      j=1, \ldots, n.
    \\
    (\sigma_1, \ldots, \sigma_n)
    & =
    & E (u_l, u_r)
    & \mbox{waves' sizes in the solution to Riemann problem~\eqref{eq:22}, left.}
    \\
    (\tilde \sigma_1, \ldots, \tilde \sigma_n)
    & =
    & \tilde E (u_l, u_r)
    & \mbox{waves' sizes in the solution to Riemann problem~\eqref{eq:22}, right.}
  \end{array}\!\!\!
\end{equation}
We set $\sigma_i = E_i (u_l, u_r)$ and
$\tilde\sigma_i = \tilde E_i (u_l, u_r)$, for $i=1, \ldots, n$.

By possibly changing the values of a function
$u \in \BV (\reali; \Omega)$ at countably many points, we assume that
$u$ is right continuous. The distributional derivative $\mu$ of $u$ is
then a vector measure that can be decomposed into a continuous part
$\mu_c$ and an atomic part
$\mu_a$. Following~\cite[Formula~(4.1)]{ColomboGuerra2008bis}, for
$i = 1, \ldots, n$, we consider the wave measure $\mu_i$ defined by
\begin{equation}
  \label{eq:20}
  \mu_i (B)
  =
  \int_B l_i (u) \, \d{\mu_c}
  +
  \sum_{x \in B} E_i\left(u (x-), u (x+)\right) \,,
\end{equation}
for any Borel set $B \subseteq \reali$. Let $\mu_i^+$ and $\mu_i^-$ be
the positive and negative, respectively, parts of $\mu_i$ and
$\modulo{\mu_i} = \mu_i^+ + \mu_i^-$ be the total variation of
$\mu_i$. When necessary, to remind the connection between the measure
$\mu$ and the function $u$, we denote below the left hand side
in~\eqref{eq:20} by $\mu_i (u, B)$, setting also
$\modulo{\mu_i (u, B)} = \mu_i^+ (u, B) + \mu_i^-(u, B)$.

\begin{theorem}
  \label{thm:1}
  Let $f,\tilde f, g, \tilde g$ satisfy~\textbf{(H1)}. Fix
  $\bar u \in \Omega$. Let $\hat\lambda$ be an upper bound for all
  characteristic speeds of both systems~\eqref{eq:3} and define for
  $a,b \in \reali$ with $a < b$
  \begin{equation}
    \label{eq:47}
    \begin{array}{rcl}
      I_t
      & =
      & [a+\hat\lambda\,t, b-\hat\lambda\, t]
      \\
      \mathcal{T}_t
      & =
      & \left\{
        (\tau,x) \in \reali^+ \times \reali \colon
        \tau \in [0,t] \mbox{ and } x \in I_\tau
        \right\}
    \end{array}
    \mbox{ for } t>0
    \mbox{ and }t < (b-a)/\hat\lambda.
  \end{equation}
  Then,
  \begin{enumerate}
  \item The two systems~\eqref{eq:3} generate the SRSs
    $S \colon \reali_+ \times \mathcal{D} \to \mathcal{D}$ and
    $\tilde S \colon \reali_+ \times \tilde{\mathcal{D}} \to
    \tilde{\mathcal{D}}$.

  \item There exists a positive $\delta$ such that
    \begin{displaymath}
      \left(\mathcal{D} \cap \tilde{\mathcal{D}}\right)
      \supseteq
      \left\{ u \in \Lloc1 (\reali; \Omega) \colon
        \norma{u-\bar u}_{\L\infty (\reali; \reali^n)} < \delta
        \mbox{ and }
        \tv (u) < \delta
      \right\} \,.
    \end{displaymath}

  \item If moreover~\textbf{(H2)} holds, there exists a positive
    constant $C$ such that for all
    $u_o \in (\mathcal{D} \cap \tilde{\mathcal{D}})$,
    \begin{equation}
      \label{eq:18}
      \int_{I_t}
      \norma{(S_t u_o) (x) - (\tilde S_t u_o) (x)} \, \d{x}
      \leq
      C \; t \; \tv (u_o; I_0) \,
      \left(\diam u (\mathcal{T}_t)\right)^2
    \end{equation}
    where $u (t,x) = (S_t u_o) (x)$.

  \item If~\textbf{(H2)} and~\textbf{(H3)} hold, then we have the
    improved estimate
    \begin{equation}
      \label{eq:19}
      \int_{I_t}
      \norma{(S_t u_o) (x) - (\tilde S_t u_o) (x)} \, \d{x}
      \leq
      C \; t \, \left(\sum_{i=1} ^n \mu_i^- (u_o; I_0)\right)
      \left(\diam u (\mathcal{T}_t)\right)^2 .
    \end{equation}
  \end{enumerate}
\end{theorem}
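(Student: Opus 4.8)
The plan is to set up a comparison between the two semigroups by tracking, along a single trajectory $u(t,\cdot)=S_t u_o$, the time-derivative of the $\L1$-distance $\int_{I_t}\norma{S_t u_o - \tilde S_t u_o}\,\d{x}$. Since $S$ and $\tilde S$ are Standard Riemann Semigroups, the key tool is the standard error-functional technique (as in~\cite[Chapter~9]{BressanLectureNotes} and~\cite{BianchiniColombo}): estimate how fast the trajectory of $S$ fails to be a trajectory of $\tilde S$. Concretely, one freezes $u(t,\cdot)$ at a time $t$, and compares the exact evolution under $\tilde S$ over an infinitesimal interval $[t,t+h]$ with $u(t+h,\cdot)$; the leading discrepancy is governed, at each jump and in the continuous part, by the difference between the Riemann solvers $E$ and $\tilde E$ of the two systems, weighted by the wave measures $\mu_i$. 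This yields a Gronwall-type inequality whose source term is the "instantaneous error" produced by replacing $E$ with $\tilde E$.

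The heart of the matter is therefore a \emph{local} (Riemann-problem level) estimate: under~\textbf{(H2)}, i.e. $A(u)=\tilde A(u)$, the two systems share eigenvalues, eigenvectors, rarefaction curves and Hugoniot loci to \emph{second} order, so that for a Riemann data $(u^l,u^r)$ one has $E(u^l,u^r)-\tilde E(u^l,u^r)=\O\,\modulo{u^r-u^l}^3$, or more precisely a bound of the form $\sum_i\modulo{E_i-\tilde E_i}\le C\,(\diam)^2\sum_i\modulo{\sigma_i}$. I would prove this by Taylor-expanding the Lax curves $\psi_j(\sigma)(u)$ and $\tilde\psi_j(\tilde\sigma)(u)$: since $A=\tilde A$, the vector fields $r_j=\tilde r_j$ agree, hence the rarefaction curves coincide exactly; for the shock curves one checks, using the Rankine–Hugoniot relations for $g,f$ versus $\tilde g,\tilde f$ together with~\eqref{eq:10}, that they agree up to and including the second-order term in $\sigma$. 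Thus the gluing maps differ only at third order, and composing $n$ such curves (implicit function theorem) transfers this to $E-\tilde E$. This Riemann-level cubic estimate is what ultimately produces the factor $\tv(u_o;I_0)\,(\diam u(\mathcal T_t))^2$ in~\eqref{eq:18}: one power of total variation from summing wave strengths, two powers of the diameter from the quadratic smallness of $E-\tilde E$ relative to a single wave.

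To make this rigorous I would (i) first invoke parts~1 and~2, already essentially standard under~\textbf{(H1)} via~\eqref{eq:21} and~\cite[Chapters~7,8]{BressanLectureNotes}, to know both SRSs exist on a common domain; (ii) regularize $u_o$ by a piecewise-constant approximation and work with wave-front tracking or Glimm approximations, so that the wave measures $\mu_i$ reduce to finite sums over fronts and the functional $t\mapsto\int_{I_t}\norma{S_t u_o-\tilde S_t u_o}$ can be differentiated by following interaction times; (iii) at each time bound $\frac{d}{dt}\int_{I_t}\norma{\cdot}$ by the instantaneous error, which by the Riemann-level estimate is $\le C\,(\diam u(\mathcal T_t))^2\sum_i\modulo{\mu_i(u(t,\cdot);I_t)}$, and then use the decreasing Glimm-type functional to bound $\sum_i\modulo{\mu_i(u(t,\cdot);I_t)}\le C\,\tv(u_o;I_0)$ uniformly in $t$; (iv) integrate in $t$ over $[0,t]$ and pass to the limit in the approximation to recover~\eqref{eq:18}. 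The finite-speed-of-propagation geometry of $\mathcal T_t$ and $I_t$ is exactly what lets us localize $\tv$ to $I_0$ and the diameter to $\mathcal T_t$.

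For part~4, under the additional assumption~\textbf{(H3)} of a full set of Riemann coordinates, I would replace the crude bound $\sum_i\modulo{\mu_i}\le C\,\tv(u_o)$ by the sharper Glimm–Lax-type estimate~\cite{GlimmLax}: in rich systems the \emph{positive} wave content is controlled by the \emph{negative} one plus the quadratic interaction potential, so that after an initial time the total variation is essentially governed by $\sum_i\mu_i^-(u_o;I_0)$; the cancellation of positive and negative $i$-waves in the continuous part of the solution means the source term in the Gronwall estimate can be bounded by $\bigl(\sum_i\mu_i^-(u_o;I_0)\bigr)(\diam u(\mathcal T_t))^2$, giving~\eqref{eq:19}. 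I expect the main obstacle to be the Riemann-level cubic estimate done \emph{with explicit, $\diam$-quadratic} dependence (step (iii) above): one must be careful that the second-order agreement of the shock curves is genuine — it relies on the fact that the \emph{first} Hugoniot-locus correction is determined by $A(u)$ alone — and that the implicit-function-theorem composition of curves does not degrade the cubic bound; keeping track of constants here is also what feeds into the refined Proposition~\ref{prop:1}. The delicate differentiation of the $\L1$ functional along non-smooth trajectories is routine given the SRS machinery but must be handled with the usual care at interaction times.
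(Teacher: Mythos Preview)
Your plan for parts~1--3 is essentially the paper's: wave-front tracking approximations for the first system, the error formula from \cite[Theorem~2.9]{BressanLectureNotes} against $\tilde S$, and the cubic Riemann-level estimate coming from the second-order agreement of shock curves (rarefaction curves being identical under~\textbf{(H2)}). The paper organizes the local step slightly differently---it estimates directly the $\L1(\d\xi)$ distance between a single front and the full $\tilde{\mathcal R}$-solution, and must track also the \emph{quadratic} difference in shock speeds $\Lambda_i-\tilde\Lambda_i$, not only the cubic difference of shock curves---but the content is the same.

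For part~4 your stated mechanism has a gap. You propose to improve $\sum_i|\mu_i(u(t,\cdot);I_t)|$ to $\sum_i\mu_i^-(u_o;I_0)$ via a Glimm--Lax ``positive wave content controlled by the negative one'' estimate. That inequality is false as stated: for pure-rarefaction data $\mu_i^-\equiv0$ while the total variation remains positive for all time, so no such bound on $\sum_i|\mu_i|$ can hold. The correct route uses an ingredient you already wrote down but did not exploit: since rarefaction curves coincide exactly under~\textbf{(H2)}, a rarefaction front contributes only $\O\,\epsilon\,\sigma$ to the instantaneous error, so the source term in your step~(iii) is \emph{already} supported on shocks, i.e.\ bounded by $(\diam)^2\sum_{\sigma_y<0}|\sigma_y|$ at time $t$. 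What remains is to bound the negative wave strength at time~$t$ by that at time~$0$. Here~\textbf{(H3)} enters: measuring wave sizes in Riemann coordinates, rarefactions of distinct families simply cross each other with sizes unchanged. The paper then introduces the functional
\[
\Upsilon^\epsilon(t)=\sum_{\sigma_y<0}|\sigma_y|\;+\;C\!\!\sum_{(\sigma_y,\sigma_{y'})\in\mathcal A^*(t)}|\sigma_y\,\sigma_{y'}|\,,
\]
where $\mathcal A^*$ is the set of approaching pairs \emph{excluding} rarefaction--rarefaction pairs, checks it is non-increasing (standard interaction estimates handle all other cases; rarefaction--rarefaction interactions give $\Delta\Upsilon^\epsilon=0$ by the crossing property), and concludes $\sum_{\sigma_y<0}|\sigma_y|(t)\le(1+C\,\tv)\sum_i\mu_i^-(u_o;I_0)+\O\epsilon$. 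Letting $\epsilon\to0$ gives~\eqref{eq:19}.
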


\noindent The proof is deferred to Section~\ref{sec:Proofs}. As a
corollary, since $\diam u (\mathcal{T}_t) \leq \O\, \tv (u_o; I_0)$,
we immediately obtain the following result.

\begin{corollary}
  \label{cor:1}
  Let $f$, $\tilde f$, $g$, $\tilde g$ satisfy~\textbf{(H1)}
  and~\textbf{(H2)}. With the same notation as in Theorem~\ref{thm:1},
  \begin{equation}
    \label{eq:24}
    \int_{I_t}
    \norma{(S_t u_o) (x) - (\tilde S_t u_o) (x)} \, \d{x}
    \leq
    C \; t \; \tv (u_o; I_0)^3
  \end{equation}
\end{corollary}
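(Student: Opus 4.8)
The plan is to deduce Corollary~\ref{cor:1} directly from part~3 of Theorem~\ref{thm:1}, which is exactly the kind of routine consequence that follows once the main estimate~\eqref{eq:18} is in hand. The only ingredient needed beyond the theorem itself is the elementary observation that the diameter of the image of a $\BV$ solution over the space--time region $\mathcal{T}_t$ is controlled by the total variation of the initial datum restricted to $I_0$; this is stated in the remark immediately preceding the corollary as $\diam u(\mathcal{T}_t) \leq \O\,\tv(u_o;I_0)$.

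First I would recall why that diameter bound holds: for a Standard Riemann Semigroup, the total variation along any time slice is non--increasing (up to the standard $\O$ factor coming from the Glimm functional), and by finite propagation speed the values of $u(t,x)=(S_t u_o)(x)$ for $(t,x)\in\mathcal{T}_t$ depend only on the initial data on $I_0$. Hence for any two points $(t_1,x_1),(t_2,x_2)\in\mathcal{T}_t$ one has $\norma{u(t_1,x_1)-u(t_2,x_2)} \leq \O\,\tv(u_o;I_0)$, so that $\diam u(\mathcal{T}_t)\leq \O\,\tv(u_o;I_0)$. Squaring gives $\left(\diam u(\mathcal{T}_t)\right)^2 \leq \O\,\tv(u_o;I_0)^2$.

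Then I would simply substitute this into~\eqref{eq:18}. Using $\tv(u_o;I_0)\cdot\left(\diam u(\mathcal{T}_t)\right)^2 \leq \O\,\tv(u_o;I_0)\cdot\tv(u_o;I_0)^2 = \O\,\tv(u_o;I_0)^3$, and absorbing the $\O$ factor together with the constant from Theorem~\ref{thm:1} into a single new constant $C$, we obtain
\begin{displaymath}
  \int_{I_t}\norma{(S_t u_o)(x)-(\tilde S_t u_o)(x)}\,\d{x}
  \leq C\,t\,\tv(u_o;I_0)^3,
\end{displaymath}
which is~\eqref{eq:24}. Note that part~3 of Theorem~\ref{thm:1} already presupposes \textbf{(H1)} and \textbf{(H2)}, which are exactly the hypotheses of the corollary, so nothing further is required; in particular parts~1 and~2 of the theorem guarantee that $S$ and $\tilde S$ are well defined on $\mathcal{D}\cap\tilde{\mathcal{D}}$ and that this intersection is non--trivial, so the statement is not vacuous.

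There is no real obstacle here — the corollary is a one--line consequence of the theorem. The only point requiring a modicum of care is making explicit the $\diam \leq \O\,\tv$ estimate, which rests on the non--increase of the Glimm-type functional along the SRS orbit and on finite propagation speed; both are classical facts about Standard Riemann Semigroups (see~\cite[Chapter~9]{BressanLectureNotes}) and are used elsewhere in the paper, so I would state it as known rather than reprove it.
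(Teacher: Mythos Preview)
Your proposal is correct and matches the paper's approach exactly: the paper states the corollary as an immediate consequence of~\eqref{eq:18} together with the bound $\diam u(\mathcal{T}_t) \leq \O\,\tv(u_o;I_0)$, which is precisely what you do. Your additional justification of the diameter bound via finite propagation speed and the non--increase of total variation is a welcome elaboration of what the paper leaves implicit.
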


Throughout this paper, $C$ and $\O$ are constants that depends on
norms of $f$, $g$, $\tilde f$, $\tilde g$ computed on $\Omega$. More
detailed information on the constant $C$ appearing in~\eqref{eq:18},
\eqref{eq:19} and~\eqref{eq:24} are provided by the following
Proposition.

\begin{proposition}
  \label{prop:1}
  Let $f,\tilde f, g, \tilde g$ satisfy~\textbf{(H1)}
  and~\textbf{(H2)}. With the same notation as in Theorem~\ref{thm:1},
  define
  \begin{equation}
    \label{eq:aggiunta}
    \!\!\!\begin{array}{@{}c@{\,}l@{}}
    & \displaystyle
      \Delta \! \left((f,g), (\tilde f, \tilde g)\right)
    \\
     =
    & \displaystyle
      \sup_{u \in \Omega}
      \max_{i=1, \ldots, n}
      \norma{
      \left(D \lambda_i (u) \, r_i (u)\right) \,
      \left[
      \left(D\tilde g (u)\right)^{-1} \, D^2 \tilde g (u)
      - \left(D g (u)\right)^{-1} \, D^2 g (u)
      \right]
      \left(r_i (u), r_i (u)\right)
      }.
    \end{array}
  \end{equation}
  Then, the constant $C$ appearing in~\eqref{eq:18}, \eqref{eq:19}
  and~\eqref{eq:24} satisfies
  \begin{equation}
    \label{eq:43}
    C
    =
    \O
    \left(\Delta \! \left((f,g), (\tilde f, \tilde g)\right)
      +
      \diam u (\mathcal{T}_t)\right) \,.
  \end{equation}
\end{proposition}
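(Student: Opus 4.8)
The plan is to track the source of the leading-order error between the two semigroups back through the proof of Theorem~\ref{thm:1}, and to identify which local quantity governs the cubic constant. Since $S$ and $\tilde S$ share the same smooth solutions under~\textbf{(H2)}, the discrepancy between $S_t u_o$ and $\tilde S_t u_o$ can only arise from the nonlinear wave interaction terms: an individual Lax curve $\psi_i(\sigma)(u)$ and its counterpart $\tilde\psi_i(\sigma)(u)$ agree to first order (same eigenvector $r_i$) and, because~\eqref{eq:10} forces the characteristic speeds along the two systems to coincide, they also agree to second order; they first differ at third order in $\sigma$. Concretely, I would expand $\psi_i(\sigma)(u) - \tilde\psi_i(\sigma)(u)$ in a Taylor series in $\sigma$ and show that the first non-vanishing coefficient is a multiple of
\begin{displaymath}
  \left[
    \left(D\tilde g (u)\right)^{-1} \, D^2 \tilde g (u)
    - \left(D g (u)\right)^{-1} \, D^2 g (u)
  \right]\left(r_i(u), r_i(u)\right),
\end{displaymath}
contracted against $D\lambda_i(u)\,r_i(u)$ once the wave-size parametrization (which is normalized through $\lambda_i$) is taken into account. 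This reduces the problem to a computation on the defining ODEs for the Lax curves: the shock curves are governed by the Rankine--Hugoniot relations $g(u^+)-g(u^-) = \frac{1}{s}\bigl(f(u^+)-f(u^-)\bigr)$ versus the tilde analogue, and the rarefaction curves by $u' = r_i(u)$ in both cases, so only the shock part carries the discrepancy, and that discrepancy is driven precisely by $D^2 g$ versus $D^2\tilde g$ (the second-order geometry of the change of conserved variable), since $A = \tilde A$ already equalizes the first-order data.

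Next I would feed this pointwise estimate into whatever interaction/comparison functional is used in Section~\ref{sec:Proofs} to prove~\eqref{eq:18}: the $\L1$ distance is built up by summing, over interactions occurring in the triangle $\mathcal{T}_t$, the mismatch produced at each interaction, each such term being bounded by the product of the strengths of the two interacting waves times the above third-order coefficient evaluated at the relevant state. Summing the products of wave strengths over all interactions gives a factor $\tv(u_o;I_0)\cdot\bigl(\diam u(\mathcal{T}_t)\bigr)$ (one power of total variation from the Glimm-type interaction bound, one power of the diameter from the fact that each wave strength in the region is itself $\O\,\diam u(\mathcal{T}_t)$), and the explicit third-order coefficient supplies the remaining factor, which when maximized over $u\in\Omega$ and $i$ is exactly $\Delta\!\left((f,g),(\tilde f,\tilde g)\right)$. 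The correction $+\diam u(\mathcal{T}_t)$ in~\eqref{eq:43} accounts for the higher-order (fourth and beyond) terms in the Taylor expansion, whose coefficients depend on further derivatives of $f,g,\tilde f,\tilde g$ but which are lumped into a single $\O(1)$ times one extra power of the diameter; the same remark covers the difference between the true $\L1$ distance and its leading-order surrogate.

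The main obstacle I anticipate is the bookkeeping in the third-order expansion of the Lax curves, specifically getting the coefficient to come out as the clean combination displayed in~\eqref{eq:aggiunta} rather than a less symmetric expression. The subtlety is that the natural parametrization of the $i$-th Lax curve by wave size is tied to $\lambda_i$ (via the normalization $D\lambda_i\cdot r_i\geq0$ fixed in the text), so a change of parametrization produces exactly the factor $D\lambda_i(u)\,r_i(u)$ in front; one must check that the reparametrization does not also contaminate the bracketed term at leading order, which follows because the bracket already vanishes to the needed order in the smooth (rarefaction) part and the reparametrization only reshuffles lower-order data. A secondary technical point is verifying that $\bigl(D g\bigr)^{-1}D^2 g$ is genuinely the right object: this is the Christoffel-type term measuring how the flat connection in the $w=g(u)$ coordinates looks in the $u$ coordinates, and the difference of the two such terms is intrinsic precisely because $A=\tilde A$ kills the first-order ambiguity — I would make this explicit by differentiating the identity $DF(g(u))\,Dg(u) = Df(u)$ (and its tilde version) twice and subtracting, which isolates $D^2 g - D^2\tilde g$ weighted by $A$.
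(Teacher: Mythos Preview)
Your core idea is correct and matches the paper: the constant $C$ is governed by the quantity $\kappa_V$ in~\eqref{eq:44}, which measures the third-order discrepancy between the shock curves $S_i(\sigma)(u)$, $\tilde S_i(\sigma)(u)$ and the second-order discrepancy between the shock speeds $\Lambda_i$, $\tilde\Lambda_i$; the paper's proof of Proposition~\ref{prop:1} is literally one line invoking Lemma~\ref{lem:nuovo}, which carries out the Taylor expansion you describe and shows $\kappa_V \leq \O(\Delta + \diam V)$. You also correctly note that rarefaction curves coincide under~\textbf{(H2)}, so only shocks contribute.

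Two points in your plan do not match how the argument actually runs. First, the $\L1$ error in the proof of Theorem~\ref{thm:1} is \emph{not} assembled by summing over interactions; it comes from the instantaneous error formula~\eqref{eq:16}, which sums, over the \emph{jumps} of the front-tracking approximation, the discrepancy between each jump and the tilde Riemann solver. This yields $\sum_{\sigma_y<0}\kappa_V\modulo{\sigma_y}^3$ (see~\eqref{eq:1} and~\eqref{eq:17}), and then $\sum\modulo{\sigma_y}^3 \leq (\max\modulo{\sigma_y})^2\sum\modulo{\sigma_y} \leq (\diam V)^2\,\tv$ supplies the two powers of the diameter; no Glimm interaction potential is needed here. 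Second, the factor $D\lambda_i\,r_i$ does not come from a reparametrization tied to $\lambda_i$. The paper uses arc-length in $u$ (see~\eqref{eq:42}), and $D\lambda\,r$ enters because differentiating the Rankine--Hugoniot relation twice gives $\dot\Lambda(0)=\tfrac12 D\lambda\,r$ (see~\eqref{eq:31}); at the next differentiation this produces the term $\tfrac12(D\lambda\,r)\,(Dg)^{-1}D^2g(r,r)$ in~\eqref{eq:39}, and subtracting the tilde analogue isolates exactly the bracket in~\eqref{eq:aggiunta}. So the clean form of $\Delta$ is not a parametrization artifact but the residue of the third differentiation of Rankine--Hugoniot after all lower-order terms cancel by~\textbf{(H2)}.
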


\begin{remark}
  \label{rem:Delta}
  {\rm Proposition~\ref{prop:1} implies that if
    $\Delta \! \left((f,g), (\tilde f, \tilde g)\right) = 0$
    in~\eqref{eq:43}, then the bounds~\eqref{eq:18}, \eqref{eq:19}
    and~\eqref{eq:24} provide fourth order estimates.}
\end{remark}

\subsection{The Scalar Case}
\label{subs:Scalar}

Consider the Cauchy problems~\eqref{eq:3} in the scalar ($n = 1$)
case, so that the characteristic speed is
$\lambda (u) = f' (u) / g' (u)$. Now, condition~\textbf{(H2)}
takes the form
\begin{equation}
  \label{eq:36}
  \dfrac{f' (u)}{g' (u)} = \dfrac{\tilde f' (u)}{\tilde g' (u)} \,.
\end{equation}
Rather precise estimates are now available, as shown in the next
result.

\begin{theorem}
  \label{thm:scalar}
  In the scalar case, assume that conditions~\textbf{(H1)}
  and~\textbf{(H2)} hold. Then, for any
  $u_o \in \BV (\reali; \Omega)$,
  \begin{eqnarray*}
    &
    & \int_{I_t}
      \modulo{(S_t u_o) (x) - (\tilde S_t u_o) (x)} \, \d{x}
    \\
    & \leq
    & \dfrac{2}{(\inf_\Omega \modulo{g'})(\inf_\Omega \modulo{\tilde g'})}
      \;
      \norma{f'' \; \tilde g'' - \tilde f'' \; g''}_{\C0 (\Omega; \reali)}
      \tv^- (u_o) \; \left(\diam u_o (\reali)\right)^2 \; t
    \\
    &
    & \quad +
      \O
      \left(
      \norma{f' -\tilde f'}_{\C3 (\Omega; \reali)}
      +
      \norma{g' -\tilde g'}_{\C3 (\Omega; \reali)}
      \right)
      \tv^- (u_o) \; \left(\diam u_o (\reali)\right)^3 \, t
  \end{eqnarray*}
\end{theorem}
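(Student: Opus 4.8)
The plan is to specialize, and sharpen the constants in, the argument behind parts~3--4 of Theorem~\ref{thm:1}, exploiting that in the scalar case the Kruzhkov semigroups are globally defined and $\L1$--contractive. Write $w = g (u)$, $\tilde w = \tilde g (u)$, $F = f \circ g^{-1}$, $\tilde F = \tilde f \circ \tilde g^{-1}$, so that $S = g^{-1} \circ \mathbb{S} \circ g$ and $\tilde S = \tilde g^{-1} \circ \tilde{\mathbb{S}} \circ \tilde g$, where $\mathbb{S}$, $\tilde{\mathbb{S}}$ are the Kruzhkov semigroups of $\partial_t w + \partial_x F (w) = 0$ and $\partial_t \tilde w + \partial_x \tilde F (\tilde w) = 0$; this is why no smallness of $\tv (u_o)$ is required. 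Since $\tilde g^{-1}$ is Lipschitz with constant $1/\inf_\Omega \modulo{\tilde g'}$,
\begin{displaymath}
  \norma{S_t u_o - \tilde S_t u_o}_{\L1}
  \le
  \frac{1}{\inf_\Omega \modulo{\tilde g'}} \;
  \norma{\tilde g (S_t u_o) - \tilde{\mathbb{S}}_t \bigl(\tilde g (u_o)\bigr)}_{\L1} ,
\end{displaymath}
and I would bound the right hand side by the standard error estimate for Standard Riemann (here, Kruzhkov) Semigroups, see~\cite[Chapter~9]{BressanLectureNotes}: it is controlled by $\int_0^t \mathcal E (\tau) \, \d\tau$, with $\mathcal E (\tau)$ the instantaneous amount by which $\tilde g (S_\tau u_o)$ fails to solve the second conservation law, and with $\L1$ contraction constant $1$.

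The next step is to localize $\mathcal E$. Putting $u = S_\tau u_o$, condition~\textbf{(H2)}, i.e.~\eqref{eq:36}, gives $\partial_t \tilde g (u) + \partial_x \tilde f (u) = 0$ wherever $u$ is continuous, so $\mathcal E (\tau)$ is supported on the at most countably many shocks of $u (\tau, \cdot)$. At the shock at $x_j$, with states $u_j^\mp = u (\tau, x_j\mp)$, size $\epsilon_j = u_j^+ - u_j^-$, and speed $\dot x_j = \bigl(f (u_j^+) - f (u_j^-)\bigr)/\bigl(g (u_j^+) - g (u_j^-)\bigr)$ dictated by the first system, the local contribution is the Rankine--Hugoniot defect of the second system, $D_j = \dot x_j \bigl(\tilde g (u_j^+) - \tilde g (u_j^-)\bigr) - \bigl(\tilde f (u_j^+) - \tilde f (u_j^-)\bigr)$. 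Since $\lambda$ is shared and genuinely nonlinear, $\lambda$ is monotone on (each component of) $\Omega$ and $F$, $\tilde F$ are convex or concave in the respective conserved variable, so a compressive jump of the first system is still a single admissible shock of the second; hence $\tilde{\mathbb{S}}_h$ merely translates the frozen jump of $\tilde g (u)$ at the second system's speed, and comparing the two step functions after time $h$ gives $\mathcal E (\tau) = \sum_j \modulo{D_j (\tau)}$.

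The heart of the matter is the estimate of $D_j$. Using the Rankine--Hugoniot identity of the first system I would rewrite, with $G = \tilde g - g$ and $\Phi = \tilde f - f$,
\begin{displaymath}
  D_j = \dot x_j \bigl(G (u_j^+) - G (u_j^-)\bigr) - \bigl(\Phi (u_j^+) - \Phi (u_j^-)\bigr) ,
\end{displaymath}
which is linear in $(G, \Phi)$, depends only on $G'$, $\Phi'$, and vanishes when the two systems coincide. Taylor expanding in $\epsilon_j$ about $u_j^-$ and using~\eqref{eq:36} in the form $\Phi' = \lambda \, G'$ together with its first derivative $\Phi'' = \lambda' G' + \lambda G''$, the orders $\epsilon_j$ and $\epsilon_j^2$ of $D_j$ cancel, leaving
\begin{displaymath}
  D_j
  =
  - \frac{\bigl(f'' \tilde g'' - \tilde f'' g''\bigr) (u_j^-)}{12 \, g' (u_j^-)} \; \epsilon_j^3
  +
  r_j , \qquad
  r_j = \tfrac1{24} \, \partial_{u^+}^4 D_j (\xi_j) \; \epsilon_j^4 ,
\end{displaymath}
the cubic coefficient being equivalently $-\lambda' (g' \tilde g'' - g'' \tilde g')/(12 g')$, which makes its dependence on the difference of the two systems explicit, and $\partial_{u^+}^4 D_j$, still linear in $(G', \Phi')$, is bounded by $\O \bigl(\norma{f' - \tilde f'}_{\C3 (\Omega;\reali)} + \norma{g' - \tilde g'}_{\C3 (\Omega;\reali)}\bigr)$. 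Bounding $\modulo{g'}$ from below and $\bigl|f'' \tilde g'' - \tilde f'' g''\bigr|$ by its $\C0$ norm then gives $\modulo{D_j} \le K_1 \modulo{\epsilon_j}^3 + K_2 \modulo{\epsilon_j}^4$ with $K_1$, $K_2$ as required.

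It remains to sum and integrate. Bounding $\modulo{\epsilon_j}^3 \le \bigl(\diam u_o (\reali)\bigr)^2 \modulo{\epsilon_j}$ and $\modulo{\epsilon_j}^4 \le \bigl(\diam u_o (\reali)\bigr)^3 \modulo{\epsilon_j}$ — using that $\diam u (\tau, \cdot)$ does not increase — and summing, $\sum_j \modulo{\epsilon_j} \le \tv^- (u (\tau, \cdot)) \le \tv^- (u_o)$, because each shock, by genuine nonlinearity and the sign convention $D\lambda \cdot r \ge 0$, contributes to the negative variation, which is non-increasing (finite propagation speed pins the boundary values, so $\tv^+$ and $\tv^-$ decrease together with $\tv$). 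Hence $\mathcal E (\tau) \le \bigl(K_1 (\diam u_o (\reali))^2 + K_2 (\diam u_o (\reali))^3\bigr) \tv^- (u_o)$; integrating over $[0, t]$ and multiplying by $1/\inf_\Omega \modulo{\tilde g'}$ yields the bound, the numerical constant $2$ absorbing the (non-sharp) estimates above. The main obstacle is the fourth order expansion of $D_j$: one must verify that~\eqref{eq:36} forces the two lowest non-trivial orders to vanish, pin down the cubic coefficient as a multiple of $f'' \tilde g'' - \tilde f'' g''$, and organize the remainder so as to be governed by the distance between $(f, g)$ and $(\tilde f, \tilde g)$ in $\C3$; a secondary point is the uniform-in-$\tau$ passage from the shock-by-shock bound to the integral estimate, via the monotonicity of $\tv^-$ and $\diam$ and the admissibility transfer between the two systems.
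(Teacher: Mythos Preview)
Your plan is correct and, at its core, identical to the paper's: use the semigroup error formula, observe that by~\eqref{eq:36} only shocks contribute, and Taylor expand the shock contribution to show it is $\O(\modulo{\sigma}^3)$ with leading coefficient proportional to $f''\tilde g'' - \tilde f'' g''$. Your Rankine--Hugoniot defect $D_j$ is exactly $(\lambda-\tilde\lambda)\bigl(\tilde g(u_j^+)-\tilde g(u_j^-)\bigr)$, so your expansion of $D_j$ and the paper's expansion of $k_u(\sigma)=\bigl(f(u+\sigma)-f(u)\bigr)\bigl(\tilde g(u+\sigma)-\tilde g(u)\bigr)-\bigl(\tilde f(u+\sigma)-\tilde f(u)\bigr)\bigl(g(u+\sigma)-g(u)\bigr)$ are the same computation (indeed $D_j=k_u(\epsilon_j)/\bigl(g(u+\epsilon_j)-g(u)\bigr)$), and your cubic coefficient $-(f''\tilde g''-\tilde f''g'')/(12g')$ agrees with the paper's $k''''(0)=-2(f''\tilde g''-\tilde f''g'')$.

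The one genuine methodological difference is that you work with the exact Kruzhkov solution, while the paper uses front tracking approximations $u^\epsilon$ and lets $\epsilon\to0$. Your route is cleaner and gives a sharper constant ($1/12$ rather than $2$), but it hides a technical step you should make explicit: showing that the instantaneous error $\liminf_{h\to0}h^{-1}\norma{\tilde g(S_{\tau+h}u_o)-\tilde{\mathbb S}_h\tilde g(S_\tau u_o)}_{\L1}$ equals $\sum_j\modulo{D_j(\tau)}$ for a.e.~$\tau$ requires the Vol'pert chain rule (or the fine structure of BV entropy solutions) to see that the continuous part of $Du$ contributes nothing. The paper's choice of front tracking sidesteps this, since $u^\epsilon$ is piecewise constant with finitely many fronts and the rarefaction fronts contribute only $\O(\epsilon\,\sigma)$, vanishing in the limit. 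Either route is fine; just be aware that yours needs that extra ingredient.
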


\noindent Above, by $\tv^- (u)$ we denote the negative total
variation. The proof is deferred to Section~\ref{sec:Proofs}.

A well known possible application of Theorem~\ref{thm:scalar} is given
by the various versions of Burgers' scalar equation
$\partial_t \left(\dfrac{u^m}{m}\right) + \partial_x
\left(\dfrac{u^{m+1}}{m+1}\right) =0$,
for $m \in \{1, 2, \ldots\}$, see~\cite[Formul\ae~(11.34)
and~(11.35)]{LeVequeBook2002}.


\section{The Isentropic Approximation of Euler Equations}
\label{sec:Euler}

On equations~\eqref{eq:12} and~\eqref{eq:11} we assume throughout
that:
\begin{description}
\item[(e)] The internal energy $e$ is a real valued smooth function
  defined on $\left]0, +\infty\right[ \times \reali$ and satisfies
  \begin{displaymath}
    \partial_s e (\rho, s) >0 \,.
  \end{displaymath}

\item[(p)] The pressure $p$ is a real valued smooth function defined
  on $\left]0, +\infty\right[ \times \reali$ and satisfies
  \begin{displaymath}
    p (\rho, s) = \rho^2 \; \partial_\rho e (\rho,s) \,,
    \qquad
    \partial_\rho p (\rho, s) > 0 \,,
    \qquad
    \partial_{\rho\rho}^2 \left(\rho \, p (\rho, s)\right) >0 \,.
  \end{displaymath}
\end{description}

\noindent Above, condition~\textbf{(e)} ensures that the absolute
temperature $\theta = \partial_s e$ is positive,
see~\cite[Formula~(7.1.10)]{DafermosBook}. In~\textbf{(p)}, the former
condition follows from Gibbs relation,
see~\cite[Formula~(2.5.14)]{DafermosBook}, the second states that
pressure is an increasing function of the density at constant
entropy. From the analytic point of view, \textbf{(e)}
and~\textbf{(p)} ensure that both systems~\eqref{eq:12}
and~\eqref{eq:11} satisfy~\textbf{(H1)} and~\textbf{(H2)}.

\begin{lemma}
  \label{lem:gas}
  Let~\textbf{(e)} and~\textbf{(p)} hold. Then, systems~\eqref{eq:12}
  and~\eqref{eq:11} fit into~\eqref{eq:3} setting
  \begin{displaymath}
    u
    =
    \left[
      \begin{array}{c}
        \rho \\ v \\ s
      \end{array}
    \right]
    \quad
    \begin{array}{r@{\,}c@{\,}lr@{\,}c@{\,}l}
      g (u)
      & =
      & \left[
        \begin{array}{c}
          \rho \\ \rho \, v \\ \rho \, s
        \end{array}
      \right]
      & f (u)
      & =
      & \left[
        \begin{array}{c}
          \rho \, v
          \\
          \rho \, v^2 + p (\rho, s)
          \\\rho \, v \, s
        \end{array}
      \right]
      \\
      \tilde g (u)
      & =
      & \left[
        \begin{array}{c}
          \rho \\ \rho \, v \\ \frac12 \, \rho \, v^2 + \rho\, e (\rho,s)
        \end{array}
      \right]
      &\tilde f (u)
      & =
      & \left[
        \begin{array}{c}
          \rho \, v
          \\
          \rho \, v^2 + p (\rho, s)
          \\
          \left(
          \frac{1}{2} \, \rho \, v^2 + \rho \, e (\rho, s) + p (\rho, s)
          \right) v
        \end{array}
      \right] .
    \end{array}
  \end{displaymath}
  Moreover, conditions~\textbf{(H1)} and~\textbf{(H2)} hold, with
  \begin{displaymath}
    A
    =
    \left[
      \begin{array}{@{}ccc@{}}
        v
        & \rho
        & 0
        \\
        \frac{1}{\rho} \, \partial_\rho p
        & v
        & \frac{1}{\rho} \, \partial_s p
        \\
        0
        & 0
        &v
      \end{array}
    \right]
    \quad
    \begin{array}{@{}r@{\,}c@{\,}l@{}}
      \lambda_1
      & =
      & v - \sqrt{\partial_\rho p}
      \\
      \lambda_2
      & =
      & v
      \\
      \lambda_3
      & =
      & v + \sqrt{\partial_\rho p}
    \end{array}
    \qquad
    r_1 = \left[
      \begin{array}{@{}c@{}}
        -1
        \\
        \frac{\sqrt{\partial_\rho p}}{\rho}
        \\
        0
      \end{array}
    \right]
    \quad
    r_2 = \left[
      \begin{array}{@{}c@{}}
        1
        \\
        0
        \\
        -\frac{\partial_\rho p}{\partial_s p}
      \end{array}
    \right]
    \quad
    r_3 = \left[
      \begin{array}{@{}c@{}}
        1
        \\
        \frac{\sqrt{\partial_\rho p}}{\rho}
        \\
        0
      \end{array}
    \right].
  \end{displaymath}
\end{lemma}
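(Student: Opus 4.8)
The statement is essentially a verification, and I would carry it out in three steps, the last of which is the only substantial one.

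First, I would identify the two systems with the abstract form~\eqref{eq:3}. Writing $\partial_t g(u) + \partial_x f(u) = 0$ componentwise with the $g,f$ in the statement reproduces verbatim the three equations of~\eqref{eq:11} (continuity, momentum, transport of $\rho\,s$), while $\partial_t \tilde g(u) + \partial_x \tilde f(u) = 0$ reproduces~\eqref{eq:12}, its third component being exactly the energy balance; so both fit into~\eqref{eq:3} on any $\Omega \subseteq \left]0,+\infty\right[ \times \reali \times \reali$. For invertibility, $g$ is inverted explicitly by $\rho = w_1$, $v = w_2/w_1$, $s = w_3/w_1$ on $\{w_1>0\}$; for $\tilde g$ I would fix $\rho>0$ and $v$ and observe that by~\textbf{(e)} the map $s \mapsto \frac12 \rho\, v^2 + \rho\, e(\rho,s)$ has strictly positive derivative $\rho\,\partial_s e$, hence is a diffeomorphism onto its image with smooth inverse by the inverse function theorem, so $\tilde g$ and $\tilde g^{-1}$ are smooth. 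Together with the smoothness of $f,g,\tilde f,\tilde g$ this gives the first part of~\textbf{(H1)}.

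Next, I would compute $A = (Dg)^{-1}Df$ by writing down $Dg$ and $Df$ and inverting, obtaining the displayed matrix, whose last row $(0,0,v)$ reflects that the entropy is transported along smooth solutions. The characteristic polynomial factors as $(v-\lambda)\bigl((v-\lambda)^2 - \partial_\rho p\bigr)$, giving the stated $\lambda_1,\lambda_2,\lambda_3$, which are real and strictly ordered because $\partial_\rho p>0$ by~\textbf{(p)}; substituting the stated $r_1,r_2,r_3$ into $(A-\lambda_i\,\Id)\,r_i = 0$ confirms they are right eigenvectors. For the field classification, $D\lambda_2\cdot r_2$ is the second component of $r_2$, which vanishes, so the $2$-field is linearly degenerate; and a short computation gives $D\lambda_i\cdot r_i = \partial_{\rho\rho}^2(\rho\,p)\big/\bigl(2\,\rho\,\sqrt{\partial_\rho p}\bigr) > 0$ for $i\in\{1,3\}$ by~\textbf{(p)}, so these fields are genuinely nonlinear and the sign convention $D\lambda_i\cdot r_i\geq 0$ holds with the $r_i$ as written. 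This completes~\textbf{(H1)}.

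Finally, for~\textbf{(H2)} I would argue as follows. Since $g$ and $\tilde g$ are invertible, $\partial_t g(u)+\partial_x f(u)=0 \iff \partial_t u + A(u)\,\partial_x u = 0$ and similarly with tildes, so~\eqref{eq:10} is equivalent to the assertion that~\eqref{eq:12} and~\eqref{eq:11} have the same smooth solutions. This is the classical derivation of the energy equation: multiplying the momentum equation by $v$, using continuity, and subtracting from the energy equation, the velocity and internal-energy terms cancel — and here the relation $p = \rho^2\,\partial_\rho e$ from~\textbf{(p)} is precisely what makes the cancellation work — leaving $\rho\,\partial_s e\,(\partial_t s + v\,\partial_x s)=0$, hence $\partial_t s + v\,\partial_x s = 0$ by~\textbf{(e)}, which via continuity is $\partial_t(\rho s)+\partial_x(\rho v s)=0$. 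Thus~\textbf{(H2)} holds; alternatively one checks $\tilde A = (D\tilde g)^{-1}D\tilde f = A$ directly, by a longer computation again using $p=\rho^2\partial_\rho e$. I expect this cancellation — equivalently the by-hand identity $\tilde A = A$ — to be the only non-mechanical point and the place where the thermodynamic hypotheses genuinely enter; everything else is linear algebra and explicit inversions.
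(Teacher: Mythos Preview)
Your verification is correct and matches the paper's approach: the paper simply states that ``the proof is obtained through elementary computations'' and omits details, while you have carried out precisely those computations --- the explicit inversion of $g$ and $\tilde g$ (using $\partial_s e>0$ for the latter), the form of $A$, the eigenstructure, the field classification via $\partial_{\rho\rho}^2(\rho\,p)>0$, and the equivalence of the third equations along smooth flows via $p=\rho^2\partial_\rho e$. There is nothing to add; your argument for~\textbf{(H2)} through the classical energy--to--entropy reduction is the natural route and is equivalent to checking $\tilde A=A$ directly.
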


\noindent The proof is obtained through elementary computations.

We now check that solutions to the classical $p$-system also
solve~\eqref{eq:11} as soon as the initial datum has constant entropy.

\begin{lemma}
  \label{lem:smooth}
  Let~\textbf{(e)} and~\textbf{(p)} hold. Fix a constant state
  $(\bar\rho, \bar v, \bar s) \in \reali^+ \times \reali \times
  \reali$.
  Call
  $S^{2\times 2} \colon \reali^+ \times \mathcal{D}^{2\times 2} \to
  \mathcal{D}^{2\times 2}$
  the SRS generated by~\eqref{eq:15} and
  $S^{3\times 3} \colon \reali^+ \times \mathcal{D}^{3\times 3} \to
  \mathcal{D}^{3\times 3}$ the SRS generated by~\eqref{eq:11}, with
  \begin{eqnarray*}
    \mathcal{D}^{2\times2}
    & \supseteq
    & \left\{
      (\rho, v) \in
      \Lloc1 (\reali; \reali^+ \times \reali)
      \colon
      \begin{array}{l}
        \norma{(\rho,v) - (\bar\rho, \bar v)}_{\L\infty (\reali; \reali^2)} < \delta
        \\
        \tv (\rho, v) < \delta
      \end{array}
    \right\}
    \\
    \mathcal{D}^{3\times3}
    & \supseteq
    & \left\{
      (\rho, v, s) \in \Lloc1 (\reali; \reali^+ \times \reali \times \reali)
      \colon
      \begin{array}{l}
        \norma{(\rho,v,s) - (\bar\rho, \bar v, \bar s)}_{\L\infty (\reali; \reali^3)} < \delta
        \\
        \tv (\rho, v, s) < \delta
      \end{array}
    \right\}
  \end{eqnarray*}
  for a positive $\delta$. For any $(\rho_o, v_o)$ such that
  $\norma{(\rho_o,v_o) - (\bar\rho, \bar v)}_{\L\infty (\reali; \reali^2)}
  < \delta$ and $\tv (\rho_o, v_o) < \delta$, then
  \begin{displaymath}
    \begin{array}{r@{\,}c@{\,}l}
      (\rho_o, v_o)
      &\in
      & \mathcal{D}^{2\times2}
      \\
      (\rho_o, v_o, \bar s)
      & \in
      & \mathcal{D}^{3\times3}
    \end{array}
    \quad \mbox{ and } \quad
    \left(S^{2\times2}_t (\rho_o, v_o), \bar s\right)
    =
    S^{3\times3}_t (\rho_o, v_o, \bar s) \,.
  \end{displaymath}
\end{lemma}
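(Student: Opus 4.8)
\textbf{Proof proposal for Lemma~\ref{lem:smooth}.}

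The plan is to exploit the fact that both systems~\eqref{eq:15} and~\eqref{eq:11} fall under assumptions~\textbf{(H1)} and~\textbf{(H2)}, so each generates an SRS, and that on initial data with spatially constant entropy the two evolutions must coincide because the SRS is the unique Lipschitz semigroup compatible with the given Riemann solver. First I would observe that the candidate map $t \mapsto (S^{2\times 2}_t(\rho_o, v_o), \bar s)$ is well defined on the stated domain $\mathcal{D}^{2\times 2}$ and takes values in $\mathcal{D}^{3\times 3}$, possibly after shrinking $\delta$: indeed its total variation equals $\tv(S^{2\times 2}_t(\rho_o, v_o))$, which is controlled by $\O\,\tv(\rho_o, v_o)$ by the standard BV bounds for the SRS, and its $\L\infty$ oscillation is likewise controlled, so the inclusion $(\rho_o, v_o, \bar s) \in \mathcal{D}^{3\times 3}$ follows and the orbit stays inside.

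The core of the argument is to show that $t \mapsto \bigl(S^{2\times 2}_t(\rho_o, v_o), \bar s\bigr)$ is itself a weak entropy solution of~\eqref{eq:11}, equivalently that it is an orbit of $S^{3\times 3}$. I would do this first for piecewise constant data obtained by front tracking: if $(\rho_o, v_o)$ is piecewise constant with small total variation, the Riemann problems appearing in the $2\times 2$ front-tracking construction involve only $1$- and $2$-waves (the acoustic fields $\lambda_1 = v - \sqrt{\partial_\rho p}$ and $\lambda_3 = v + \sqrt{\partial_\rho p}$ in the notation of Lemma~\ref{lem:gas}), across which $s$ is invariant; appending the constant entropy $\bar s$ turns each such wave into a corresponding $1$- or $3$-wave of the $3\times 3$ system~\eqref{eq:11}, with no $2$-wave (the linearly degenerate entropy field) present since there is no entropy jump. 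Here the key point is that the Rankine--Hugoniot conditions and the Lax admissibility conditions for the $2\times2$ acoustic waves are exactly those for the $3\times3$ system restricted to the slice $\{s = \bar s\}$: on that slice the flux $\tilde f$ and $\tilde g$ of~\eqref{eq:11} reduce, in the $(\rho, \rho v)$ components, precisely to the $p$-system flux, while the third equation $\partial_t(\rho s) + \partial_x(\rho v s) = \bar s\,(\partial_t \rho + \partial_x(\rho v)) = 0$ is automatically satisfied once the first equation holds. One should also check the eigenvalue ordering is preserved, i.e. that $\lambda_1 < \lambda_2 = v < \lambda_3$ along the solution, which holds on a small enough neighborhood of $(\bar\rho, \bar v, \bar s)$. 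Thus each front-tracking approximate solution of the $2\times2$ problem, with $\bar s$ appended, is an admissible front-tracking approximate solution of the $3\times3$ problem with the same errors, and passing to the limit $\epsilon \to 0$ gives $(S^{2\times2}_t(\rho_o, v_o), \bar s) = S^{3\times3}_t(\rho_o, v_o, \bar s)$ for piecewise constant data. The general case follows by the $\L1$-Lipschitz continuity of both semigroups and density of piecewise constant functions in $\mathcal{D}^{2\times2}$.

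The main obstacle is the bookkeeping at the level of Riemann problems and front tracking: one must verify carefully that the wave-speed and entropy-admissibility conditions for the acoustic waves of the $p$-system coincide with those for the full gas dynamics system on the constant-entropy slice, and that no spurious $2$-wave is generated in any interaction — equivalently, that the set $\{s = \bar s\}$ is invariant under the $3\times3$ Riemann solver when both states lie on it. An alternative, slicker route that avoids front tracking is to invoke the uniqueness characterization of the SRS (see~\cite[Chapter~9]{BressanLectureNotes}): the map $(t, (\rho_o,v_o)) \mapsto (S^{2\times2}_t(\rho_o,v_o), \bar s)$ is $\L1$-Lipschitz in $(t, u)$, and at every point it is tangent to the $3\times3$ Riemann solver (because near a point of approximate jump or approximate continuity the local behaviour of the $2\times2$ solution with $\bar s$ appended coincides with the self-similar Riemann solution of~\eqref{eq:11}); by the uniqueness theorem for the SRS this forces it to equal $S^{3\times3}$. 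I would present the front-tracking version as the main line since it makes the invariance of $\{s=\bar s\}$ most transparent, and only sketch the uniqueness-based alternative.
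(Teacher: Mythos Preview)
Your proposal is correct, and in fact you have identified both the paper's approach and a more laborious alternative. The paper takes exactly what you call the ``alternative, slicker route'': it invokes the uniqueness characterization of the SRS from~\cite{BressanLectureNotes} to reduce the claim to a comparison of Riemann solvers, and then observes that on the slice $\{s=\bar s\}$ the third equation of~\eqref{eq:11} factorizes as $\bar s\,(\partial_t\rho + \partial_x(\rho v)) = 0$, so the Lax curves of~\eqref{eq:15} are precisely the first- and third-family Lax curves of~\eqref{eq:11}, and the Riemann solutions coincide. Your main line via front tracking is a valid and more explicit version of the same invariance argument --- it makes the preservation of $\{s=\bar s\}$ under interactions completely transparent --- but it requires more bookkeeping (interaction estimates, absence of spurious $2$-waves, passage to the limit) that the SRS uniqueness theorem absorbs in one stroke. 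Since you already have the slicker argument in hand, you could safely promote it to the main proof and drop the front-tracking construction.
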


We are now ready to estimate the distance between solutions
to~\eqref{eq:12} and~\eqref{eq:11}.

\begin{theorem}
  \label{thm:2}
  Let~\textbf{(e)} and~\textbf{(p)} hold. Fix a constant state
  $(\bar\rho, \bar v, \bar s) \in \reali^+ \times \reali \times
  \reali$.
  Call
  $S^{2\times 2} \colon \reali^+ \times \mathcal{D}^{2\times 2} \to
  \mathcal{D}^{2\times 2}$
  the SRS generated by~\eqref{eq:15} and
  $\tilde S^{3\times 3} \colon \reali^+ \times
  \tilde{\mathcal{D}}^{3\times 3} \to \tilde{\mathcal{D}}^{3\times 3}$
  the SRS generated by~\eqref{eq:12}, with
  \begin{eqnarray*}
    \mathcal{D}^{2\times2}
    & \supseteq
    & \left\{
      (\rho, v) \in
      \Lloc1 (\reali; \reali^+ \times \reali)
      \colon
      \begin{array}{l}
        \norma{(\rho,v) - (\bar\rho, \bar v)}_{\L\infty (\reali; \reali^2)} < \delta
        \\
        \tv (\rho, v) < \delta
      \end{array}
    \right\}
    \\
    \tilde{\mathcal{D}}^{3\times3}
    & \supseteq
    & \left\{
      (\rho, v, s) \in \Lloc1 (\reali; \reali^+ \times \reali \times \reali)
      \colon
      \begin{array}{l}
        \norma{(\rho,v,s) - (\bar\rho, \bar v, \bar s)}_{\L\infty (\reali; \reali^3)} < \delta
        \\
        \tv (\rho, v, s) < \delta
      \end{array}
    \right\}
  \end{eqnarray*}
  for a positive $\delta$. If $(\rho_o, v_o)$ is such that
  $\norma{(\rho_o,v_o) - (\bar\rho, \bar v)}_{\L\infty (\reali;
    \reali^2)} < \delta$
  and $\tv (\rho_o, v_o) < \delta$, then
  $(\rho_o, v_o) \in \mathcal{D}^{2\times2}$,
  $(\rho_o, v_o, \bar s) \in \tilde{\mathcal{D}}^{3\times3}$ and, for
  a suitable positive $C$,
  \begin{equation}
    \label{eq:49}
    \begin{array}{cl}
      & \displaystyle
        \int_{I_t}
        \norma{
        \left(S^{2\times2}_t (\rho_o, v_o) (x), \bar s\right)
        -
        \tilde S^{3\times3}_t (\rho_o, v_o, \bar s) (x)} \d{x}
      \\
      \leq
      & \displaystyle
        C \, t \,
        \left(
        \mu_1^- \left((\rho_o, v_o);I_0\right)
        +
        \mu_2^- \left((\rho_o, v_o);I_0\right)
        \right)
        \left(\diam (\rho_o,v_o) (I_0)\right)^2
    \end{array}
  \end{equation}
  where $(\mu_1,\mu_2)$ are the measures~\eqref{eq:20} referred
  to~\eqref{eq:15}.
\end{theorem}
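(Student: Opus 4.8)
The plan is to reduce everything to Theorem~\ref{thm:1} applied to the pair~\eqref{eq:11},~\eqref{eq:12} and then to transfer the estimate to~\eqref{eq:15} via Lemma~\ref{lem:smooth}. By Lemma~\ref{lem:gas}, both~\eqref{eq:11} and~\eqref{eq:12} fit the framework~\eqref{eq:3} and satisfy~\textbf{(H1)} and~\textbf{(H2)}; write $S^{3\times3}$ for the SRS of~\eqref{eq:11} and $\tilde S^{3\times3}$ for the SRS of~\eqref{eq:12}, so these are the semigroups $S,\tilde S$ of Theorem~\ref{thm:1} for this pair. Parts~1 and~2 of Theorem~\ref{thm:1} then give the SRSs together with a common domain containing every $\Lloc1$ function that is $\delta$-close to $(\bar\rho,\bar v,\bar s)$ in $\L\infty$ and has total variation $<\delta$; hence, for $\delta$ small, $(\rho_o,v_o,\bar s)\in\mathcal{D}^{3\times3}\cap\tilde{\mathcal{D}}^{3\times3}$, and $(\rho_o,v_o)\in\mathcal{D}^{2\times2}$ by Lemma~\ref{lem:smooth}. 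Since Lemma~\ref{lem:smooth} yields $\left(S^{2\times2}_t(\rho_o,v_o),\bar s\right)=S^{3\times3}_t(\rho_o,v_o,\bar s)$, the left-hand side of~\eqref{eq:49} equals $\int_{I_t}\norma{S^{3\times3}_t u_o(x)-\tilde S^{3\times3}_t u_o(x)}\d{x}$ with $u_o=(\rho_o,v_o,\bar s)$.

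The core step is to obtain the \emph{improved} bound~\eqref{eq:19} for this pair. In general the full $3\times3$ systems~\eqref{eq:11},~\eqref{eq:12} are not rich, so~\textbf{(H3)} need not hold and~\eqref{eq:19} cannot be quoted verbatim. However, by Lemma~\ref{lem:smooth} the reference solution $u(\tau,\cdot)=S^{3\times3}_\tau u_o=\left(S^{2\times2}_\tau(\rho_o,v_o),\bar s\right)$ takes values in the invariant manifold $\{s=\bar s\}$, on which~\eqref{eq:11} reduces to the $2\times2$ $p$-system~\eqref{eq:15}, which does admit a full set of Riemann coordinates. Assumption~\textbf{(H3)} enters the proof of Theorem~\ref{thm:1}.4 only to produce the Glimm--Lax type decay estimates on the reference solution that replace the total variation of~\eqref{eq:18} by the negative wave measures of~\eqref{eq:19}; since here the reference solution is governed by~\eqref{eq:15}, those estimates are available and one gets $\int_{I_t}\norma{S^{3\times3}_t u_o-\tilde S^{3\times3}_t u_o}\d{x}\le C\,t\left(\sum_{i=1}^{3}\mu_i^-(u_o;I_0)\right)\left(\diam u(\mathcal{T}_t)\right)^2$.

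It remains to rewrite the right-hand side in terms of~\eqref{eq:15}. The entropy component of $u$ is constant, so $\diam u(\mathcal{T}_t)=\diam\{S^{2\times2}_\tau(\rho_o,v_o)(x):(\tau,x)\in\mathcal{T}_t\}$, which by finite propagation speed depends only on $(\rho_o,v_o)$ restricted to $I_0$ and is $\O\bigl(\diam(\rho_o,v_o)(I_0)\bigr)$ by the standard domain-of-dependence and oscillation estimates for the $p$-system. Moreover the $2$-field of~\eqref{eq:11} is the linearly degenerate entropy field: across its $1$- and $3$-shocks the mass flux $\rho\,(v-\dot x)$ is nonzero, so the Rankine--Hugoniot relation for $\rho\,s$ forces $s$ to be continuous, rarefactions preserve $s$, and a $2$-contact joining two states of entropy $\bar s$ is trivial. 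Hence the Riemann solver of~\eqref{eq:11} applied to data of entropy $\bar s$ produces no $2$-wave, and its $1$- and $3$-Lax curves on $\{s=\bar s\}$ coincide with the two Lax curves of~\eqref{eq:15} (the Riemann-problem counterpart of Lemma~\ref{lem:smooth}). Therefore $\mu_2^-(u_o;I_0)=0$, $\mu_1^-(u_o;I_0)=\mu_1^-\bigl((\rho_o,v_o);I_0\bigr)$ and $\mu_3^-(u_o;I_0)=\mu_2^-\bigl((\rho_o,v_o);I_0\bigr)$, the last two measures being~\eqref{eq:20} for~\eqref{eq:15}; substituting yields~\eqref{eq:49}.

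The main obstacle I expect is the claim in the second paragraph: one must check that in the proof of Theorem~\ref{thm:1}.4 the Riemann-coordinate structure required by~\textbf{(H3)} is invoked only on a neighborhood of the range of the reference solution $S^{3\times3}u_o$, which here lives on $\{s=\bar s\}$ where the effective system~\eqref{eq:15} is $2\times2$ and hence rich, and that the decay estimates invoked there are exactly those available for $S^{2\times2}(\rho_o,v_o)$. A secondary, still nontrivial, task is the wave-measure bookkeeping of the third paragraph: making precise, with compatible normalizations of the eigenvectors, that the $1$- and $3$-components of the wave measure~\eqref{eq:20} attached to $(\rho_o,v_o,\bar s)$ through~\eqref{eq:11} coincide with the wave measure~\eqref{eq:20} attached to $(\rho_o,v_o)$ through~\eqref{eq:15}, together with the oscillation bound $\diam u(\mathcal{T}_t)=\O\bigl(\diam(\rho_o,v_o)(I_0)\bigr)$.
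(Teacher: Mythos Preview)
Your proposal is correct and follows the same strategy as the paper: reduce via Lemma~\ref{lem:smooth} to the comparison of $S^{3\times3}$ and $\tilde S^{3\times3}$, exploit that the reference evolution lives on $\{s=\bar s\}$ where~\eqref{eq:11} reduces to the rich $2\times2$ system~\eqref{eq:15}, and translate wave measures and diameter back. The paper resolves your self-identified obstacle more concretely than you sketch: rather than quoting Theorem~\ref{thm:1}.4 and arguing that~\textbf{(H3)} is invoked only on the range of the reference solution, it starts from an $\epsilon$-approximate front tracking solution $(\rho^\epsilon,v^\epsilon)$ to~\eqref{eq:15} itself and observes that $(\rho^\epsilon,v^\epsilon,\bar s)$ is an $\epsilon$-approximate front tracking solution to~\eqref{eq:11}; since this lift carries only $1$- and $3$-waves by construction, Steps~1--5 and Step~7 of the proof of Theorem~\ref{thm:1} can be rerun verbatim using the Riemann coordinates of~\eqref{eq:15}, which makes your wave-measure bookkeeping automatic. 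For the diameter bound you call ``standard'', the paper cites~\cite[Theorem~3.12]{BianchiniColomboMonti}, a Glimm--Lax type oscillation estimate that is not entirely trivial and deserves a reference.
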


\noindent The proof is deferred to Section~\ref{sec:Proofs}.  The
present theorem improves the analogous result
in~\cite[Theorem~1.2]{SaintRaymond} in the following aspects:
\begin{enumerate}
\item Only the negative variation is present here, so that the
  estimate~\eqref{eq:49} is optimal whenever no shock arises from the
  initial datum.
\item The diameter of the initial datum in~\eqref{eq:49} provides an
  estimate significantly better than its total variation.
\item Theorem~\ref{thm:2} applies to \emph{any} equation of state
  satisfying~\textbf{(e)} and~\textbf{(p)}.
\end{enumerate}
\noindent With reference to~1.~above, note that~\eqref{eq:49} is
localized, hence the initial datum need not be in $\L1$ and the case
of solutions consisting of only rarefactions is included in
Theorem~\ref{thm:2}.

\section{Speed Conservation vs.~Momentum Conservation}
\label{sec:GM}

In~\cite[Section~5]{DiPernaCPAM}, the following system is considered:
\begin{equation}
  \label{eq:4}
  \left\{
    \begin{array}{l}
      \partial_ t \rho + \partial_x (\rho \, v) =0
      \\
      \partial_t v
      +
      \partial_x \left(\frac12 \, v^2 + P (\rho)\right)
      = 0
    \end{array}
  \right.
  \qquad \mbox{ where } \quad
  P' (\rho) = \dfrac{p' (\rho)}{\rho}
\end{equation}
and $p = p (\rho)$ is the pressure law for a polytropic gas, i.e.,
$p (\rho) = (k^2/\gamma) \, \rho^\gamma$ with $k >0$ and $\gamma > 1$.
Theorem~\ref{thm:1} allows to estimate the difference between
solutions to~\eqref{eq:4} and those to the classical $p$-system
\begin{equation}
  \label{eq:5}
  \left\{
    \begin{array}{l}
      \partial_ t \rho + \partial_x (\rho \, v) =0
      \\
      \partial_t (\rho \, v)
      +
      \partial_x \left(\rho \, v^2 + p (\rho)\right)
      = 0 \,.
    \end{array}
  \right.
\end{equation}
Below, we require the pressure law only to satisfy the standard
assumption
\begin{description}
\item[(P)] $p\in \C2 (\reali^+; \reali^+)$ is such that for all
  $\rho>0$, $p' (\rho) > 0$,
  $\frac{\d{}^2~}{\d{\rho}^2}\left(\rho \, p(\rho)\right) >0$.
\end{description}

\noindent We refer to~\cite{GengZhang2009} for a further result on the
estimates of the difference of solutions to systems of the
type~\eqref{eq:4} and~\eqref{eq:5} in the case, with source terms,
motivated by ducts with slowly varying section.

The comparison between~\eqref{eq:4} and~\eqref{eq:5} fits within the
scope of Theorem~\ref{thm:1}.

\begin{lemma}
  \label{lem:p}
  Let~\textbf{(P)} hold. Then, systems~\eqref{eq:4} and~\eqref{eq:5}
  fit into~\eqref{eq:3} setting
  \begin{equation}
    \label{eq:6}
    u
    =
    \left[
      \begin{array}{c}
        \rho \\ v
      \end{array}
    \right]
    \qquad\quad
    \begin{array}{@{}r@{\,}c@{\,}l@{\quad\qquad}r@{\,}c@{\,}l@{}}
      g (u)
      & =
      &\left[
        \begin{array}{c}
          \rho \\ v
        \end{array}
      \right]
      & f (u)
      & =
      & \left[
        \begin{array}{c}
          \rho \, v \\ \frac12 \, v^2 + P (\rho)
        \end{array}
      \right]_{\vphantom{|}}
      \\
      \tilde g (u)
      & =
      &\left[
        \begin{array}{c}
          \rho \\ \rho \, v
        \end{array}
      \right]
      & \tilde f (u)
      & =
      & \left[
        \begin{array}{c}
          \rho \, v \\ \rho \, v^2 + p (\rho)
        \end{array}
      \right] \,.
    \end{array}
  \end{equation}
  Moreover, conditions~\textbf{(H1)}, \textbf{(H2)} and~\textbf{(H3)}
  hold, with
  \begin{displaymath}
    A = \left[
      \begin{array}{cc}
        v
        & \rho
        \\
        \frac{p' (\rho)}{\rho}
        & v
      \end{array}
    \right]
    \qquad
    \begin{array}{r@{\,}c@{\,}l}
      \lambda_1
      & =
      & v - \sqrt{p' (\rho)}
      \\
      \lambda_2
      & =
      & v + \sqrt{p' (\rho)}
    \end{array}
    \qquad
    r_1 = \left[
      \begin{array}{c}
        - \rho
        \\
        \sqrt{p' (\rho)}
      \end{array}
    \right]
    \quad
    r_2 = \left[
      \begin{array}{c}
        \rho
        \\
        \sqrt{p' (\rho)}
      \end{array}
    \right] \,.
  \end{displaymath}
\end{lemma}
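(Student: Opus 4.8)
The proof is a direct computation; I outline the bookkeeping. First I would check that the proposed $g,f,\tilde g,\tilde f$ do reproduce the two systems. Since $g=\Id$, the first problem in~\eqref{eq:3} reads $\partial_t(\rho,v)+\partial_x\bigl(\rho v,\tfrac12 v^2+P(\rho)\bigr)=0$, which is exactly~\eqref{eq:4} once one recalls $P'(\rho)=p'(\rho)/\rho$; with $\tilde g(u)=(\rho,\rho v)$ and $\tilde f(u)=(\rho v,\rho v^2+p(\rho))$ the second problem is~\eqref{eq:5}. Invertibility in~\textbf{(H1)} is immediate: $g^{-1}=\Id$, while $\tilde g$ is inverted by $(\rho,m)\mapsto(\rho,m/\rho)$, which is smooth on $\reali^+\times\reali$; all four maps inherit from~\textbf{(P)} the regularity required in~\textbf{(H1)}. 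Here one takes $\Omega$ to be a bounded open subset of $\reali^+\times\reali$, so that $\rho$ stays bounded away from $0$.

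Next I would compute the two matrices. Since $Dg=\Id$, one has $A=(Dg)^{-1}Df=Df$, and differentiating gives the stated $A=\bigl(\begin{smallmatrix} v & \rho \\ p'(\rho)/\rho & v\end{smallmatrix}\bigr)$. For the second system $D\tilde g=\bigl(\begin{smallmatrix}1&0\\ v&\rho\end{smallmatrix}\bigr)$, hence $(D\tilde g)^{-1}=\bigl(\begin{smallmatrix}1&0\\ -v/\rho&1/\rho\end{smallmatrix}\bigr)$, and multiplying by $D\tilde f=\bigl(\begin{smallmatrix} v & \rho \\ v^2+p'(\rho) & 2\rho v\end{smallmatrix}\bigr)$ one finds, after the cancellations $-v^2/\rho+(v^2+p'(\rho))/\rho=p'(\rho)/\rho$ and $-v+2v=v$, that $\tilde A=A$; this is~\textbf{(H2)}. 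The characteristic polynomial of $A$ is $(v-\lambda)^2-p'(\rho)$, so by~\textbf{(P)} (namely $p'>0$) the eigenvalues $\lambda_{1}=v-\sqrt{p'(\rho)}<\lambda_2=v+\sqrt{p'(\rho)}$ are real and distinct, and one checks directly that $r_1=(-\rho,\sqrt{p'(\rho)})$ and $r_2=(\rho,\sqrt{p'(\rho)})$ span the corresponding eigenspaces. This settles all of~\textbf{(H1)} except the nonlinearity of the fields.

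For genuine nonlinearity I would compute, with $\lambda_i=v\mp\sqrt{p'(\rho)}$,
\[
D\lambda_1\cdot r_1=D\lambda_2\cdot r_2=\frac{\rho\,p''(\rho)+2\,p'(\rho)}{2\sqrt{p'(\rho)}}=\frac{\bigl(\rho\,p(\rho)\bigr)''}{2\sqrt{p'(\rho)}}\,,
\]
using $(\rho\,p)''=\rho\,p''+2\,p'$; by~\textbf{(P)} this is strictly positive, so both characteristic fields are genuinely nonlinear, and the sign is consistent with the normalization $D\lambda_i\cdot r_i\ge 0$ chosen for the $r_i$ above. Finally, \textbf{(H3)} is automatic here because $n=2$: each eigenvector spans a one–dimensional line field in the plane, which is integrable, so the two families of integral curves yield a full set of Riemann coordinates (equivalently, every strictly hyperbolic $2\times2$ system is rich). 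No step presents a genuine difficulty; the only place where~\textbf{(P)} is truly used—beyond the standing positivity of $\rho$ and $p'$—is the sign of $(\rho\,p)''$ in the genuine nonlinearity check, which is precisely the third condition in~\textbf{(P)}.
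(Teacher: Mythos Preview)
Your proposal is correct and follows exactly the approach indicated by the paper, which simply states that the proof is immediate and omits it. You have faithfully spelled out the elementary computations (the identification of $A=\tilde A$, the eigenstructure, the genuine nonlinearity via $(\rho p)''>0$, and the automatic validity of~\textbf{(H3)} in the $2\times2$ case) that the authors leave to the reader.
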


\noindent The proof is immediate and hence omitted.

Theorem~\ref{thm:1}, applied to the case of~\eqref{eq:4}
and~\eqref{eq:5} yields the following estimate.

\begin{corollary}
  \label{cor:p}
  Let~\textbf{(P)} hold. Fix a constant state
  $(\bar\rho, \bar v) \in \reali^+ \times \reali$. Call
  $S\colon \reali^+ \times \mathcal{D} \to \mathcal{D}$ the semigroup
  generated by~\eqref{eq:4} and
  $\tilde S\colon \reali^+ \times \tilde{\mathcal{D}} \to
  \tilde{\mathcal{D}}$ the semigroup generated by~\eqref{eq:5} with
  \begin{displaymath}
    \mathcal{D} \cap \tilde{\mathcal{D}}
    \supseteq
    \left\{
      (\rho, v)
      \in
      \Lloc1 \left(\reali; B\left((\bar\rho, \bar v), \delta\right)\right)
      \colon
      \begin{array}{l}
        \norma{(\rho,v) - (\bar\rho, \bar v)}_{\L\infty (\reali; \reali^2)}
        <
        \delta
        \\
        \tv (\rho, v)
        <
        \delta
      \end{array}
    \right\}
  \end{displaymath}
  for $\delta > 0$. Then, for any $(\rho_o, v_o)$ such that
  $\norma{(\rho_o,v_o) - (\bar\rho, \bar v)}_{\L\infty (\reali;
    \reali^2)} < \delta$ and $\tv (\rho_o, v_o) < \delta$,
  \begin{eqnarray*}
    &
    & \int_{I_t} \norma{
      \left(S_t (\rho_o, v_o)\right) (x)
      -
      \left(\tilde S_t (\rho_o, v_o)\right) (x) }
      \d{x}
    \\
    & \leq
    &
      C \, t
      \left(
      \mu_1^-\left((\rho_o, v_o); I_0\right)
      +
      \mu_2^-\left((\rho_o, v_o); I_0\right)
      \right)
      \left(
      \diam (\rho_o,v_o) (I_0)
      \right)^2
  \end{eqnarray*}
  where $(\mu_1, \mu_2)$ are the measures~\eqref{eq:20} referred to
  system~\eqref{eq:4}.
\end{corollary}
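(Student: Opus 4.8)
The plan is to obtain Corollary~\ref{cor:p} as a direct specialization of Theorem~\ref{thm:1}, the only preliminary work being the identification already carried out in Lemma~\ref{lem:p}. Fix a bounded open neighbourhood $\Omega$ of $(\bar\rho,\bar v)$ inside $\reali^+ \times \reali$, so that $\rho>0$ on $\Omega$ and \textbf{(P)} is meaningful there. By Lemma~\ref{lem:p}, under \textbf{(P)} the systems~\eqref{eq:4} and~\eqref{eq:5} are exactly the pair~\eqref{eq:3} associated with the functions $g,f,\tilde g,\tilde f$ displayed in that lemma, and moreover \textbf{(H1)}, \textbf{(H2)} and \textbf{(H3)} all hold on $\Omega$: here $p'(\rho)>0$ yields the strictly separated real eigenvalues $\lambda_1,\lambda_2$ and their eigenvectors, the convexity condition in \textbf{(P)} yields genuine nonlinearity of both fields in both systems, \textbf{(H2)} is the identity $A=\tilde A$, and \textbf{(H3)} holds because every $2\times2$ system is rich. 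Hence all the structural hypotheses of Theorem~\ref{thm:1} are in force, in particular those required by its item~4.

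Next I would let $\delta>0$ be the constant furnished by item~2 of Theorem~\ref{thm:1}, shrinking it if necessary so that the $\delta$-ball about $(\bar\rho,\bar v)$ lies in $\Omega$; since $\lambda_1=v-\sqrt{p'(\rho)}$ and $\lambda_2=v+\sqrt{p'(\rho)}$ are bounded on that ball, a common upper bound $\hat\lambda$ for the characteristic speeds of both systems exists, which fixes $I_t$ and $\mathcal{T}_t$ as in~\eqref{eq:47}. For $(\rho_o,v_o)$ with $\norma{(\rho_o,v_o)-(\bar\rho,\bar v)}_{\L\infty (\reali; \reali^2)}<\delta$ and $\tv(\rho_o,v_o)<\delta$, item~2 gives $(\rho_o,v_o)\in\mathcal{D}\cap\tilde{\mathcal{D}}$, item~1 provides the two SRS $S$ and $\tilde S$ generated by~\eqref{eq:4} and~\eqref{eq:5}, and, since here $n=2$, the improved estimate~\eqref{eq:19} specializes to
\begin{displaymath}
  \int_{I_t}\norma{(S_t(\rho_o,v_o))(x)-(\tilde S_t(\rho_o,v_o))(x)}\,\d x
  \leq
  C\,t\,\bigl(\mu_1^-((\rho_o,v_o);I_0)+\mu_2^-((\rho_o,v_o);I_0)\bigr)
  \bigl(\diam u(\mathcal{T}_t)\bigr)^2,
\end{displaymath}
where $u(t,x)=(S_t(\rho_o,v_o))(x)$ and $(\mu_1,\mu_2)$ are the wave measures~\eqref{eq:20} of system~\eqref{eq:4}, precisely the measures named in the corollary.

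It then remains only to replace $\diam u(\mathcal{T}_t)$ by $\diam (\rho_o,v_o)(I_0)$, and this is the one step requiring a little care. By finite propagation speed the restriction of $u$ to $\mathcal{T}_t$ is determined by $(\rho_o,v_o)|_{I_0}$ alone, and the oscillation of an SRS solution over such a cone of determinacy is controlled, uniformly in $t$, by the oscillation of the initial datum on its base $I_0$ (a localized form of the standard fact that SRS solutions do not essentially increase the oscillation of the data, as in the proof of Theorem~\ref{thm:1}); hence $\diam u(\mathcal{T}_t)=\O\,\diam(\rho_o,v_o)(I_0)$. Substituting this bound and absorbing the factor into $C$ — which, as throughout, depends only on norms of $f,g,\tilde f,\tilde g$ on $\Omega$ — gives the claimed inequality. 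I do not expect a genuine obstacle here: the substance is entirely contained in Theorem~\ref{thm:1}, and what is left is to match notation and to make sure that the cone $\mathcal{T}_t$ and the measures $\mu_1,\mu_2$ in the statement are literally those produced by applying that theorem with~\eqref{eq:4} in the role of the first system of~\eqref{eq:3}.
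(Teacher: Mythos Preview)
Your approach is essentially the one the paper intends: it omits the proof, noting it is ``slightly simpler than that of Theorem~\ref{thm:2}'', and that proof amounts precisely to applying item~4 of Theorem~\ref{thm:1} (here both systems are $2\times2$ and satisfy \textbf{(H3)} by Lemma~\ref{lem:p}) and then passing from $\diam u(\mathcal{T}_t)$ to $\diam(\rho_o,v_o)(I_0)$.

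The one point that needs sharpening is your justification of the last step. You write that the oscillation bound $\diam u(\mathcal{T}_t)\leq \O\,\diam(\rho_o,v_o)(I_0)$ follows ``as in the proof of Theorem~\ref{thm:1}'', but that proof does not contain such an estimate; what appears there (and in the remark preceding Corollary~\ref{cor:1}) is only $\diam u(\mathcal{T}_t)\leq \O\,\tv(u_o;I_0)$, which is weaker. Controlling the diameter of the solution by the diameter of the datum is a genuinely $2\times2$ fact going back to Glimm--Lax; in the proof of Theorem~\ref{thm:2} the paper invokes exactly this, via \cite[Theorem~3.12]{BianchiniColomboMonti}. You should cite the same result here rather than treat it as a routine SRS property.
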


\noindent The proof is slightly simpler than that of
Theorem~\ref{thm:2} and, hence, it is omitted.

\section{A Traffic Model}
\label{subs:Traffic}

As a final example, we consider the traffic
model~\cite[Formula~(1.2)]{ColomboMarcelliniRascle}, which reads
\begin{equation}
  \label{eq:25}
  \left\{
    \begin{array}{l}
      \partial_t \rho
      +
      \partial_x \left(\rho \, w \, \psi (\rho)\right) = 0
      \\
      \partial_t w
      +
      w \, \psi (\rho) \, \partial_x w = 0
    \end{array}
  \right.
\end{equation}
where $\rho$ is the car density and $v = w\, \psi (\rho)$ is the
traffic speed at density $\rho$, the Lagrangian variable $w$
describing the maximal speed of drivers. For any smooth invertible
function function $q = q (w)$, system~\eqref{eq:25} can be put in the
conservative from
\begin{equation}
  \label{eq:26}
  \left\{
    \begin{array}{l}
      \partial_t \rho
      +
      \partial_x \left(\rho \, w \, \psi (\rho)\right) = 0
      \\
      \partial_t \left(\rho \, q (w)\right)
      +
      \partial_x \left(\rho \, w \, \psi (\rho) \, q (w)\right) = 0 \,.
    \end{array}
  \right.
\end{equation}
With the notation in Section~\ref{sec:Main}, we have
\begin{displaymath}
  u = \left[
    \begin{array}{@{\,}c@{\,}}
      \rho \\ w
    \end{array}
  \right]
  \qquad\quad
  g (u) = \left[
    \begin{array}{@{\,}c@{\,}}
      \rho \\ \rho \, q (w)
    \end{array}
  \right]
  \qquad\quad
  f (u) = \left[
    \begin{array}{@{\,}c@{\,}}
      \rho \, w \, \psi (\rho)\\ \rho \, w \, \psi (\rho) \, q (w)
    \end{array}
  \right] \, .
\end{displaymath}
Elementary computations yield
\begin{displaymath}
  A (u)
  =
  \left(Dg (u)\right)^{-1} \; Df (u)
  =
  \left[
    \begin{array}{cc}
      \left(\psi (\rho) + \rho \, \psi' (\rho)\right) \, w
      & \rho \, \psi (\rho)
      \\
      0
      & w \, \psi (\rho)
    \end{array}
  \right] \,.
\end{displaymath}
Note that the matrix $A$ is \emph{independent} of the choice of
$q$. Moreover, Remark~\ref{rem:Delta} applies, coherently with the
fact that all systems of the form~\eqref{eq:26} share the same weak as
well as strong solutions, whatever the function $q$,
see~\cite[Remark~5.3]{ColomboMarcelliniRascle}.


\section{Proofs}
\label{sec:Proofs}

Throughout, by $\O$ we denote a quantity dependent only on norms of
$Df$, $Dg$, $D\tilde f$ and $D\tilde g$ computed on a fixed
neighborhood of $\bar u$ in $\Omega$.

\begin{lemma}
  \label{lem:change}
  Let $f,g$ satisfy~\textbf{(H1)}. Then, the function $F$ defined by
  $F = f \circ g^{-1}$ is smooth and for all $w \in g(\Omega)$ the
  matrix $DF (w)$ admits the eigenvalues
  $\Lambda_1 (w), \ldots, \Lambda_n (w)$ and the eigenvectors
  $R_1 (w), \ldots, R_n (w)$, with
  \begin{displaymath}
    \begin{array}{rcl}
      \Lambda_i (w)
      & =
      & \lambda_i \left(g^{-1} (w)\right)
      \\
      R_i (w)
      & =
      & Dg \left(g^{-1} (w)\right) \, r_i \left(g^{-1} (w)\right)
      \\
      D \Lambda_i (w) \, R_i (w)
      & =
      & D \lambda_i \left(g^{-1} (w)\right) \, r_i\left(g^{-1} (w)\right)
    \end{array}
    \qquad \mbox{ for } i=1, \ldots, n\,.
  \end{displaymath}
\end{lemma}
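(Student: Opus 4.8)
The plan is to verify each of the three displayed identities by a direct chain-rule computation, starting from the defining relation $F = f \circ g^{-1}$, and then translating the spectral data of $DF(w)$ back to that of $A(u) = (Dg(u))^{-1} Df(u)$ via the substitution $u = g^{-1}(w)$.

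First I would establish smoothness of $F$: since $g$ is smooth with smooth inverse (by \textbf{(H1)}) and $f$ is smooth, the composition $F = f \circ g^{-1}$ is smooth on $g(\Omega)$. Next, I would differentiate to get $DF(w) = Df(g^{-1}(w)) \, Dg^{-1}(w)$, and use $Dg^{-1}(w) = \left(Dg(g^{-1}(w))\right)^{-1}$. Writing $u = g^{-1}(w)$ for brevity, this gives $DF(w) = Df(u) \, \left(Dg(u)\right)^{-1}$. Now observe that $A(u) = \left(Dg(u)\right)^{-1} Df(u)$ is similar to $DF(w)$ via the matrix $Dg(u)$: indeed $Dg(u) \, A(u) \, \left(Dg(u)\right)^{-1} = Df(u)\,\left(Dg(u)\right)^{-1} = DF(w)$. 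Hence the eigenvalues coincide, which yields $\Lambda_i(w) = \lambda_i(u) = \lambda_i(g^{-1}(w))$, and if $A(u) r_i(u) = \lambda_i(u) r_i(u)$ then applying $Dg(u)$ to both sides shows $DF(w) \left(Dg(u) r_i(u)\right) = \lambda_i(u) \left(Dg(u) r_i(u)\right)$, so $R_i(w) = Dg(g^{-1}(w)) \, r_i(g^{-1}(w))$ is the corresponding right eigenvector; the strict ordering and the genuine-nonlinearity/linear-degeneracy dichotomy transfer automatically since they are intrinsic to the field.

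The one identity requiring slightly more care is the third, $D\Lambda_i(w)\, R_i(w) = D\lambda_i(g^{-1}(w))\, r_i(g^{-1}(w))$. Here I would compute $D\Lambda_i(w) = D\left(\lambda_i \circ g^{-1}\right)(w) = D\lambda_i(g^{-1}(w)) \, Dg^{-1}(w) = D\lambda_i(u) \left(Dg(u)\right)^{-1}$ (a row vector times a matrix). Then $D\Lambda_i(w)\, R_i(w) = D\lambda_i(u) \left(Dg(u)\right)^{-1} Dg(u)\, r_i(u) = D\lambda_i(u)\, r_i(u)$, and the inverse cancels cleanly. This is the natural invariance of the genuine-nonlinearity coefficient under the change of conserved variable, and it is exactly what makes the coefficient $D\lambda_i \cdot r_i$ the right object to normalize as announced after \textbf{(H3)}.

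I do not expect any genuine obstacle here: the entire lemma is linear algebra plus the chain rule, and the only subtlety is bookkeeping whether a given derivative is acting as a matrix on the left or the right (i.e.\ keeping $D\lambda_i$ as a row vector and $\left(Dg\right)^{-1}$ on the correct side so that the cancellation $\left(Dg\right)^{-1} Dg = \Id$ goes through). The normalization ambiguity in $r_i$ versus $R_i$ is harmless because the lemma is stated as a compatibility between a fixed choice on one side and the induced choice on the other. I would present the computation once for general $i$ and remark that the ordering hypotheses and field classification are preserved by similarity, which completes the proof.
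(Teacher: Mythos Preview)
Your proof is correct and is exactly the direct chain-rule/similarity computation the paper has in mind; indeed the paper simply writes ``The proof is immediate and hence omitted.'' There is nothing to add: your verification of the three identities, in particular the cancellation $D\Lambda_i(w)\,R_i(w)=D\lambda_i(u)\,(Dg(u))^{-1}Dg(u)\,r_i(u)=D\lambda_i(u)\,r_i(u)$, is the natural and intended argument.
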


\noindent The proof is immediate and hence omitted.

\begin{proofof}{Theorem~\ref{thm:1}}
  The proof is divided into several steps. We use throughout the
  notation~\eqref{eq:27}.

  \paragraph{Step~1:} Let~\textbf{(H1)} hold. Fix a positive
  $\epsilon$ and let $u^\epsilon$ be an $\epsilon$--approximate front
  tracking solution in the sense
  of~\cite[Definition~7.1]{BressanLectureNotes} to the the Cauchy
  problem in~\eqref{eq:3}, left. Then, for any $T > 0$ such that
  $a + \hat\lambda \, T < b - \hat\lambda \, T$,
  \begin{equation}
    \label{eq:16}
    \begin{array}{cl}
      & \displaystyle
        \int_{I_T}
        \norma{
        u^\epsilon (T,x)
        -
        \left(\tilde S_T \left(u^\epsilon (0)\right)\right) (x)}
        \d{x}
      \\
      \leq
      & \displaystyle
        \tilde L
        \int_0^T
        \sum_{y \in I_t\cap J_t^\epsilon}
        \int_{-\hat\lambda}^{\hat\lambda}
        \norma{
        U(t, y, \xi)
        -
        \left(\tilde{\mathcal{R}}\left(u (t,y-), u (t, y+)\right)\right) (\xi)
        }
        \d\xi
        \d{t}
    \end{array}
  \end{equation}
  where
  \begin{eqnarray}
    \nonumber
    \tilde L
    & =
    & \L1 \mbox{--Lipschitz constant of } t \to \tilde S_t u_o
    \\
    \nonumber
    J^\epsilon_t
    & =
    & \left\{y \in \reali \colon u^\epsilon (t, y-) \neq u^\epsilon (t, y+)\right\}
    \\
    \nonumber
    U (t, y,\xi)
    & =
    & u^\epsilon (t, y-) \, \caratt{]-\infty, \lambda (t, y)[} (\xi)
      +
      u^\epsilon (t, y+) \, \caratt{]\lambda (t, y), +\infty[} (\xi)
    \\
    \nonumber
    \lambda (t, y)
    & =
    & \mbox{speed of the jump in } u^\epsilon (t)
      \mbox{ at time }t
      \mbox{ at point } y \mbox{ with } y \in J^\epsilon_t
    \\
    \label{eq:tildeRP}
    \xi \to \left(\tilde{\mathcal{R}} (u_l, u_r)\right) (\xi)
    & =
    & \mbox{Lax solution to }
      \left\{
      \begin{array}{l}
        \partial_t \tilde g (u) + \partial_\xi \tilde f (u) =0
        \\
        u (0,\xi) =
        \left\{
        \begin{array}{@{\,}lr@{\,}c@{\,}l}
          u_l
          & \xi
          & <
          & 0
          \\
          u_r
          & \xi
          & >
          & 0
        \end{array}
            \right.
      \end{array}
            \mbox{ at } t=1\,.
            \right.
  \end{eqnarray}

  Indeed, by the finite propagation speed of~\eqref{eq:3}, we can
  apply~\cite[Theorem~2.9]{BressanLectureNotes} on the set
  \begin{displaymath}
    \left\{
      (t, x) \in \reali_+ \times \reali \colon
      t \in [0, (b-a) / \hat\lambda]
      \mbox{ and }
      x \in I_t
    \right\}
  \end{displaymath}
  obtaining
  \begin{eqnarray*}
    &
    & \int_{I_T}
      \norma{
      u^\epsilon (T,x)
      -
      \left(\tilde S_T \left(u^\epsilon (0)\right)\right) (x)}
      \d{x}
    \\
    & \leq
    & \tilde L
      \int_0^T
      \liminf_{h \to 0+} \dfrac{1}{h} \,
      \int_{I_t}
      \norma{
      u^\epsilon (t+h,x)
      -
      \left(\tilde S_h \left(u^\epsilon (t)\right)\right) (x)}
      \d{x} \;
      \d{t}
    \\
    & \leq
    & \tilde L
      \int_0^T
      \sum_{y \in I_t \cap J^\epsilon_t}
      \liminf_{h\to 0+}
      \frac{1}{h}
      \int_{y-\hat \lambda h}^{y+\hat \lambda h}
      \norma{
      u^\epsilon (t+h, x)
      -
      \left(\tilde S_h \left(u^\epsilon (t)\right)\right) (x)}
      \d{x} \;
      \d{t}
    \\
    & \leq
    & \tilde L
      \int_0^T
      \sum_{y \in I_t \cap J^\epsilon_t}
      \liminf_{h\to 0+}
      \frac{1}{h}
      \int_{-\hat \lambda h}^{\hat \lambda h}
      \norma{
      u^\epsilon (t+h,y+\xi)
      -
      \left(\tilde S_h \left(u^\epsilon (t)\right)\right) (y+\xi)}
      \d\xi \;
      \d{t}
  \end{eqnarray*}
  which gives the desired estimate, thanks to the hyperbolic rescaling
  $\left[
    \begin{array}{@{}c@{}}
      h \\ \xi
    \end{array}\right] \to \left[
    \begin{array}{@{}c@{}}
      1 \\ \xi/h
    \end{array}
  \right]$.

  \paragraph{Step~2:} Assume that $u_l = \psi_i (\sigma) (u_r)$ and
  define
  $(\tilde \sigma_1, \ldots, \tilde \sigma_n) = \tilde E (u_l,
  u_r)$. Then,
  \begin{equation}
    \label{eq:2}
    \sum_{j\neq i} \modulo{\tilde \sigma_j}
    +
    \modulo{\sigma - \tilde \sigma_i}
    \leq
    \O
    \norma{\psi_i (\sigma) (u_l) - \tilde \psi_i (\sigma) (u_l)} \,.
  \end{equation}

  Indeed, use the Lipschitz continuity of $\tilde E_j$ and recall that
  $\tilde E_j\left(u_l, \tilde\psi_i (\sigma) (u_l)\right) = 0$ for
  $j \neq i$ and
  $\tilde E_i\left(u_l, \tilde\psi_i (\sigma) (u_l)\right) = \sigma$,
  to estimate
  \begin{eqnarray*}
    \sum_{j\neq i} \modulo{\tilde \sigma_j}
    +
    \modulo{\sigma - \tilde \sigma_i}
    & =
    & \sum_{j=1}^n
      \modulo{
      \tilde E_j\left(u_l, \psi_i (\sigma) (u_l)\right)
      -
      \tilde E_j\left(u_l, \tilde\psi_i (\sigma) (u_l)\right)
      }
    \\
    & \leq
    & \O
      \norma{\psi_i (\sigma) (u_l) - \tilde \psi_i (\sigma) (u_l)} \,.
  \end{eqnarray*}

  \paragraph{Step~3:} Let $\sigma \to S_i (\sigma) (u)$, respectively
  $\sigma \to S_i (\sigma) (u)$, be the $i$--shock curve for
  system~\eqref{eq:3}, left, respectively, right, exiting $u$
  parametrized by $\sigma$. Similarly, $\Lambda_i (u,\sigma)$,
  respectively $\tilde\Lambda_i (u,\sigma)$ is the corresponding
  Rankine--Hugoniot speed. Then, if~\textbf{(H2)} holds, by possibly
  reducing $\Omega$, the quantity
  \begin{equation}
    \label{eq:44}
    \kappa_V
    =
    \sup
    \left\{
      \dfrac{\norma{S_i (\sigma) (u) - \tilde S_i (\sigma) (u)}}{\modulo{\sigma^3}}
      +
      \dfrac{\modulo{\Lambda_i (u,\sigma)-\tilde\Lambda_i (u,\sigma)}}{\sigma^2}
      \colon
      \begin{array}{r@{\,}c@{\,}l@{}}
        u
        & \in
        & V
        \\
        S_i (\sigma) (u)
        & \in
        & V
        \\
        \tilde S_i (\sigma) (u)
        & \in
        & V
        \\
        i
        & \in
        & \{1, \ldots, n\}
      \end{array}
    \right\}
  \end{equation}
  is bounded, where $V$ is any open set with
  $\overline{V} \subset \Omega$. The proof of this boundedness is a
  consequence of Lemma~\ref{lem:nuovo}.


\paragraph{Step~4}
Under assumption~\textbf{(H2)} if the open set $V$ is such that
$V \supseteq u (\reali^+, \reali) \cup \tilde u (\reali^+, \reali)$,
we also have
\begin{equation}
  \label{eq:1}
  \!\!\!
  \int_{-\hat\lambda}^{\hat\lambda}
  \norma{
    U(t, y, \xi)
    -
    \left(\tilde{\mathcal{R}}\left(u (t,y-), u (t, y+)\right)\right) (\xi)
  }
  \d\xi
  \leq
  \left\{
    \begin{array}{@{}l@{\;\;\sigma\mbox{ is }}l@{}}
      \O (\kappa_V \modulo{\sigma}^3 + \epsilon \modulo{\sigma})
      & \mbox{a shock,}
      \\
      \O \, \epsilon \, \sigma
      & \mbox{a rarefaction,}
      \\
      \O \, \sigma
      & \mbox{non--physical.}
    \end{array}
  \right.
\end{equation}

    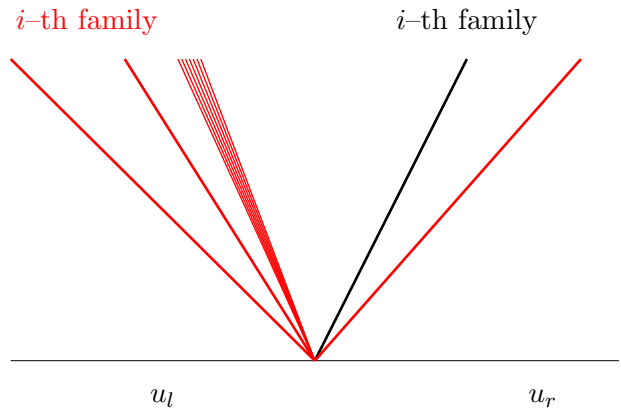
\begin{figure}[!htpb]
      \centering
      \begin{tikzpicture}[line width=0.1mm,yscale=0.5, xscale=0.5]
        \draw[-] (-8,0) -- (8,0); \draw[-,line width=1] (0,0) --
        (4,8); \draw[-,line width=0.5,color=red] (0,0) -- (-3,8);
        \draw[-,line width=0.5,color=red] (0,0) -- (-3.1,8);
        \draw[-,line width=0.5,color=red] (0,0) -- (-3.2,8);
        \draw[-,line width=0.5,color=red] (0,0) -- (-3.3,8);
        \draw[-,line width=0.5,color=red] (0,0) -- (-3.4,8);
        \draw[-,line width=0.5,color=red] (0,0) -- (-3.5,8);
        \draw[-,line width=0.5,color=red] (0,0) -- (-3.6,8);
        \draw[-,line width=1,color=red] (0,0) -- (-5,8); \draw[-,line
        width=1,color=red] (0,0) -- (-8,8); \draw[-,line
        width=1,color=red] (0,0) -- (7,8); \draw (4,9) node
        [label=center:{$i$--th family}] {}; \draw (-6,9) node
        [label=center:{\textcolor{red}{$i$--th family}}] {}; \draw
        (-4,-1) node [label=center:{$u_{l}$}] {}; \draw (6,-1) node
        [label=center:{$u_{r}$}] {};
      \end{tikzpicture}
    \caption{The Riemann problem with data $u_l, u_r$ is solved by a
      single (physical) $i$-wave $\sigma$ of the first system
      in~\eqref{eq:3} and from the waves
      $%
      (\tilde\sigma_1, \ldots,
      \tilde\sigma_n)$
      of the second system in~\eqref{eq:3}. Note that
      $u_r = \psi_{i} (\sigma) (u_{l})%
      $.}
    \label{fig:RP}
  \end{figure}
  Indeed, let $y \in J^\epsilon_t$ and call $u_l = u (t, y-)$ and
  $u_r = u (t, y+)$.

  Assume first that $\sigma \geq 0$, so that
  $\sigma = \O \, \epsilon$. Then, \textbf{(H2)} ensures that
  rarefaction curves of the two systems coincide and hence
  $u_r = \psi_i (\sigma) (u_l) = \tilde\psi_i (\sigma)
  (u_l)$. By~\cite[(ii)~in Lemma~9.1]{BressanLectureNotes},
  \begin{eqnarray*}
    \int_{-\hat\lambda}^{\hat\lambda}
    \norma{
    U(t, y, \xi)
    -
    \left(\tilde{\mathcal{R}} (u_l, u_r)\right) (\xi)
    }
    \d\xi
    & =
    & \int_{-\hat\lambda}^{\hat\lambda}
      \norma{
      U(t, y, \xi)
      -
      \left(\mathcal{R} (u_l, u_r)\right) (\xi)
      }
      \d\xi
    \\
    & \leq
    & \O \, \sigma^2
    \\
    & \leq
    & \O \, \epsilon \, \sigma \,.
  \end{eqnarray*}
  On the other hand, assume $\sigma < 0$.  Applying~\eqref{eq:2}
  and~\eqref{eq:44},
  \begin{equation}
    \label{eq:45}
    \!\!\!\!\!
    \modulo{\tilde\sigma_i - \sigma}
    \leq
    \O \, \norma{\tilde\psi_i (\sigma) (u_l) - \psi_i (\sigma) (u_l)}
    =
    \O \, \norma{\tilde S_i (\sigma) (u_l) - S_i (\sigma) (u_l)}
    \leq
    \O \, \kappa_V \, \modulo{\sigma}^3
  \end{equation}
  which ensures that $\tilde\sigma_i < 0$. Define
  $\lambda_i = \lambda (t,y)$,
  $\tilde u_{i-1} = \tilde\psi_{i-1} (\sigma_{i-1}) \circ \ldots \circ
  \tilde\psi_1 (\sigma_1) (u_l)$
  and $\tilde\lambda_i = \Lambda_i (\tilde u_{i-1}, \tilde\sigma)$.
  Assume that $\tilde\lambda_i \leq \lambda_i$, the other case being
  analogous.
  \begin{equation}
    \label{eq:7}
    \begin{array}{@{}l@{}}
      \displaystyle
      \int_{-\hat\lambda}^{\hat\lambda}
      \norma{
      U(t, y, \xi)
      -
      \tilde{\mathcal{R}}(u_l, u_r) (\xi)
      }
      \d\xi
      \leq
      \int_{-\hat\lambda}^{\tilde\lambda_i}
      \norma{
      u_l
      -
      \tilde{\mathcal{R}}(u_l, u_r) (\xi)
      }
      \d\xi
      \\
      \qquad\qquad\qquad\qquad\qquad\qquad
      \displaystyle
      +
      \int_{\tilde\lambda_i}^{\lambda_i}
      \norma{
      u_l
      -
      \tilde{\mathcal{R}}(u_l, u_r) (\xi)
      }
      \d\xi
      +
      \int_{\lambda_i}^{\hat\lambda}
      \norma{
      u_r
      -
      \tilde{\mathcal{R}}(u_l, u_r) (\xi)
      }
      \d\xi
    \end{array}
  \end{equation}
  Compute the three terms above separately. For
  $\xi < \tilde\lambda_i$, using~\eqref{eq:2} and~\eqref{eq:44},
  \begin{equation}
    \label{eq:46}
    \norma{u_l - \tilde{\mathcal{R}} (u_l, u_r) (\xi)}
    \leq
    \O \, \sum_{j<i} \modulo{\tilde\sigma_j}
    \leq
    \O \, \norma{\tilde S_i (\sigma) (u_l) - S_i (\sigma) (u_l)}
    \leq
    \O \, \kappa_V \, \modulo{\sigma}^3 \,.
  \end{equation}
  As a particular case of the above estimate, note that
  $\norma{u_l - \tilde u_{i-1}} \leq \O \, \kappa_V \,
  \modulo{\sigma}^3$.
  Hence, to estimate the middle summand in the right hand side
  of~\eqref{eq:7}, use \eqref{eq:45} to obtain
  \begin{eqnarray*}
    &
    &\int_{\tilde\lambda_i}^{\lambda_i}
      \norma{
      u_l
      -
      \tilde{\mathcal{R}}(u_l, u_r) (\xi)
      }
      \d\xi
    \\
    & \leq
    & \O \,
      \left(
      \modulo{
      \tilde\Lambda_i (\tilde u_{i-1}, \tilde \sigma_i)
      -
      \Lambda_i (u_l, \sigma)} + \epsilon
      \right) \modulo{\sigma}
    \\
    & \leq
    & \O
      \left(
      \modulo{\tilde\Lambda_i (u_l,\sigma) - \Lambda_i (u_l,\sigma)}
      +
      \modulo{\tilde u_{i-1} - u_l}
      +
      \modulo{\tilde\sigma_i - \sigma}
      +
      \epsilon
      \right)
      \modulo{\sigma}
    \\
    & \leq
    & \O
      \left(
      \kappa_V \, \sigma^2
      +
      \kappa_V \, \modulo{\sigma}^3
      +
      \kappa_V \, \modulo{\sigma}^3
      +
      \epsilon
      \right)
      \modulo{\sigma}
    \\
    & \leq
    & \O \, \left(\kappa_V \, \sigma^2 + \epsilon\right)
      \modulo{\sigma} \,.
  \end{eqnarray*}
  The third summand in~\eqref{eq:7}, for $\xi > \tilde\lambda_i$, is
  treated similarly to~\eqref{eq:46}:
  \begin{displaymath}
    \norma{u_r - \mathcal{R} (u_l, u_r) (\xi)}
    \leq
    \sum_{j>i} \modulo{\tilde\sigma_j}
    \leq
    \O \, \kappa_V \, \modulo{\sigma}^3 \,.
  \end{displaymath}
  Therefore, \eqref{eq:7} yields
  \begin{displaymath}
    \int_{-\hat\lambda}^{\hat\lambda}
    \norma{
      U(t, y, \xi)
      -
      \tilde{\mathcal{R}}(u_l, u_r) (\xi)
    }
    \d\xi
    \leq
    \O \, \left(\kappa_V \, \sigma^2 + \epsilon\right) \modulo{\sigma} \,.
  \end{displaymath}

  Finally, the case of a non--physical wave follows from~\cite[(i)~in
  Lemma~9.1]{BressanLectureNotes}, completing the proof of Step~4.

  \paragraph{Step~5:} The previous steps directly imply that
  \begin{equation}
    \label{eq:17}
    \begin{array}{cl}
      & \displaystyle
        \int_{I_T} \norma{u^\epsilon (T,x) - \tilde S_T u^\epsilon (0,x)}
        \d{x}
      \\
      \leq
      & \displaystyle
        \O \, \int_0^T
        \left(
        \kappa_V
        \sum_{y \in I_t \cap J^\epsilon_t \colon \sigma_y < 0} \modulo{\sigma_y}^3
        +
        \sum_{y \in I_t \cap J^\epsilon_t} \epsilon \, \modulo{\sigma_y}
        +
        \sum_{y \in I_t \cap J^\epsilon_t \colon \sigma_y \in \mathcal{N}\mathcal{P}}
        \modulo{\sigma_y}
        \right)
        \d{t}
    \end{array}
  \end{equation}
  where, as usual, with $\mathcal{N}\mathcal{P}$ we denote the set of
  \emph{non--physical} waves,
  see~\cite[Paragraph~7.1]{BressanLectureNotes}.

  \paragraph{Step~6:} Proof of~3.~in Theorem~\ref{thm:1}.

  Below, we exploit the fact that the total variation of the wave
  front tracking approximate solution at time $t$ is bounded by a
  constant times the total variation of the initial
  datum. By~\eqref{eq:17},
  \begin{eqnarray}
    \nonumber
    &
    & \int_{I_T} \norma{u^\epsilon (T,x) - \tilde S_T u^\epsilon (0,x)}
      \d{x}
    \\
    \nonumber
    & \leq
    & \O
      \int_0^T
      \left(
      \kappa_V
      \sum_{y \in I_t \cap J^\epsilon_t \colon \sigma_y < 0} \modulo{\sigma_y}^3
      +
      \epsilon \, \tv (u^\epsilon (t); I_t)
      + \epsilon
      \right)
      \d{t}
    \\
    \nonumber
    & \leq
    & \O
      \int_0^T
      \left(
      \kappa_V
      \max_{y \in I_t \cap J^\epsilon_t \colon \sigma_y < 0}
      \modulo{\sigma_y}^2
      \sum_{y \in I_t \cap J^\epsilon_t \colon \sigma_y < 0} \modulo{\sigma_y}
      +
      \epsilon \, \tv (u^\epsilon (t); I_t)
      + \epsilon
      \right)
      \d{t}
    \\
    \label{eq:questa}
    & \leq
    & \O
      \int_0^T
      \left(
      \kappa_V \,
      (\diam V)^2
      \sum_{y \in I_t \cap J^\epsilon_t \colon \sigma_y < 0} \modulo{\sigma_y}
      +
      \epsilon \, \tv (u^\epsilon (t); I_t)
      + \epsilon
      \right)
      \d{t}
    \\
    \nonumber
    & \leq
    & \O \int_0^T
      \left(
      \kappa_V \,
      (\diam V)^2 \,
      \tv \left(u^\epsilon (t);I_t\right)
      +
      \epsilon \, \tv (u^\epsilon (t); I_t)
      + \epsilon
      \right)
      \d{t}
    \\
    \nonumber
    & \leq
    & \O  \left(
      \kappa_V \,
      (\diam V)^2 \,
      \tv (u_o;I_0)
      +
      \epsilon \, \tv (u_o; I_0)
      + \epsilon
      \right) \, T
  \end{eqnarray}
  in the limit $\epsilon \to 0$ we obtain~\eqref{eq:18}, thanks to the
  arbitrariness of $V$, provided $V \supset u (\reali^+, \reali)$.

  \paragraph{Step~7:} Proof of~4.~in Theorem~\ref{thm:1}.

  By~\textbf{(H3)}, we may measure sizes of $i$-waves through the
  variation in the $i$-th Riemann coordinate. Introduce the following
  functional defined on the wave front tracking approximate solutions
  $u^\epsilon = u^\epsilon (t,x)$ to~\eqref{eq:3}:
  \begin{equation}
    \label{eq:23}
    \Upsilon^\epsilon (t)
    =
    \sum_{y \in I_t \cap J_t^\epsilon \colon \sigma_y < 0} \modulo{\sigma_y}
    +
    C \sum_{(\sigma_y, \sigma_{y'}) \in \mathcal{A^*} (t)}
    \modulo{\sigma_y \, \sigma_{y'}}
  \end{equation}
  where $\mathcal{A}^* (t)$ is the set of pairs of approaching waves
  in $u^\epsilon$ at time $t$
  (see~\cite[\S~7.3]{BressanLectureNotes}), that we modify excluding
  all pairs of rarefaction waves, also those belonging to different
  families.

  The map $t \to \Upsilon^\epsilon (t)$ is non increasing. Indeed,
  assume that two waves interact at time $\bar t$.  Whenever the
  interacting waves are not both rarefactions, the standard
  interaction estimates apply,
  see~\cite[\S~7.3]{BressanLectureNotes}. In interactions involving
  two rarefactions, \textbf{(H3)} ensures that
  $\Delta \Upsilon^\epsilon\left(u^\epsilon (\bar t)\right) =0$, since
  rarefactions simply cross each other and their sizes measured by
  means of Riemann coordinates remain constant.

  We now have:
  \begin{eqnarray*}
    \sum_{y \in I_t \cap J_t^\epsilon \colon \sigma_y < 0} \modulo{\sigma_y}
    & \leq
    & \Upsilon^\epsilon (t)
      \; \leq \;
      \Upsilon^\epsilon (0)
      \; =
      \sum_{y \in I_{0+} \cap J_{0+}^\epsilon \colon \sigma_y < 0} \modulo{\sigma_y}
      +
      C \sum_{(\sigma_y, \sigma_{y'}) \in \mathcal{A^*} (0+)}
      \modulo{\sigma_y \, \sigma_{y'}}
    \\
    & \leq
    & \left(1 + C \, \tv (u_o, I_0)\right)
      \sum_{y \in I_{0+} \cap J_{0+}^\epsilon \colon \sigma_y < 0} \modulo{\sigma_y} \,.
  \end{eqnarray*}
  Denote by $\mu_i^-$ the negative part of the measure~\eqref{eq:20}
  constructed from the initial datum $u_o$. Similarly, denote by
  $\mu_{i,\epsilon}^-$ the analogous measure constructed from the
  $\epsilon$-approximate piecewise constant initial datum
  $u^\epsilon$. Note that by~\cite[Formula~(4.8) in
  Lemma~4.2]{ColomboGuerra2008bis}, we can choose the piecewise
  constant initial datum $u^\epsilon (0,\cdot)$ such that
  $\mu_{i,\epsilon}^- (I_0) \leq \mu_i^- (I_0) + \epsilon$.
  \begin{eqnarray*}
    \sum_{y \in I_{0+} \cap J_{0+}^\epsilon \colon \sigma_y < 0}
    \modulo{\sigma_y}
    & =
    & \sum_{y \in I_{0} \cap J_{0}^\epsilon \colon \sigma_y < 0}
      \qquad
      \sum_{i=1}^n
      \left[
      E_i\left(u^\epsilon (0,y-), u^\epsilon (0, y+)\right)
      \right]^-
    \\
    & =
    & \sum_{i=1}^n \mu_{i,\epsilon}^- (u^\epsilon (0); I_0)
    \\
    & \leq
    & \sum_{i=1}^n \mu_{i}^- (u_o; I_0) + n\, \epsilon \,.
  \end{eqnarray*}
  Summarizing, starting from~\eqref{eq:questa}, the above inequalities
  yield
  \begin{eqnarray*}
    &
    & \int_{I_T} \norma{u^\epsilon (T,x) - \tilde S_T u^\epsilon (0,x)}
      \d{x}
    \\
    & \leq
    & \O T
      \left(
      \kappa_V
      \left(\sum_{i=1}^n \mu_i^- (u_o; I_0) + n\, \epsilon\right)
      (\diam V)^2
      +
      \epsilon \, \tv (u_o; I_0)
      + \epsilon
      \right)
      \d{t}
  \end{eqnarray*}
  and passing to the limit $\epsilon \to 0$ the proof is completed.
\end{proofof}

\begin{lemma}
  \label{lem:nuovo}
  Under assumptions~\textbf{(H1)} and~\textbf{(H2)}, the following
  bound on $\kappa_V$ as defined in~\eqref{eq:44} holds:
  \begin{equation}
    \label{eq:8}
    \kappa_V
    \leq
    \O \left(\Delta \! \left((f,g), (\tilde f, \tilde g)\right) + \diam V\right)
    \,,
  \end{equation}
  where $V$ is an open subset of $\reali^n$ and
  $\Delta \! \left((f,g), (\tilde f, \tilde g)\right)$ is defined
  in~\eqref{eq:aggiunta}.
\end{lemma}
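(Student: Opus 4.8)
The quantity $\kappa_V$ in~\eqref{eq:44} controls two things: the distance $\norma{S_i(\sigma)(u)-\tilde S_i(\sigma)(u)}$ between the two $i$-shock curves through $u$, divided by $\modulo{\sigma}^3$, and the difference $\modulo{\Lambda_i(u,\sigma)-\tilde\Lambda_i(u,\sigma)}$ of the Rankine--Hugoniot speeds, divided by $\sigma^2$. The strategy is to expand both shock curves and both shock speeds as Taylor series in $\sigma$ around $\sigma=0$, and to show that the expansions agree up to (and including) second order in $\sigma$ for the curves and first order for the speeds, so that the leading discrepancies are genuinely of the claimed orders; then to identify the coefficient of the first nonvanishing term.

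The first step is to recall the classical expansions of the Lax shock curve and shock speed for a single system of conservation laws $\partial_t g(u)+\partial_x f(u)=0$, equivalently $\partial_t w+\partial_x F(w)=0$ with $w=g(u)$, $F=f\circ g^{-1}$, as in Lemma~\ref{lem:change}. In the conserved variable $w$ one has the well-known third-order tangency of the shock and rarefaction curves: $W_i(\sigma)=w+\sigma R_i+\tfrac12\sigma^2 DR_i\cdot R_i+O(\sigma^3)$ with the cubic term differing between shock and rarefaction, and the shock speed is $\Lambda_i(w,\sigma)=\lambda_i+\tfrac12 (D\lambda_i\cdot R_i)\,\sigma+O(\sigma^2)$. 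Since~\textbf{(H2)} guarantees that the eigenvalues $\lambda_i$ and eigenvectors $r_i$ (hence $\Lambda_i$ and $R_i=Dg\,r_i$ only up to the differing change of variables) coincide as functions on $\Omega$, the expansions of $S_i$ and $\tilde S_i$ agree to second order in $\sigma$ in $u$-coordinates once we pull everything back by $g^{-1}$ resp. $\tilde g^{-1}$; the first place a discrepancy can appear is the $\sigma^3$ coefficient of the curve and the $\sigma^2$ coefficient of the speed. Thus the plan is: (i) write the $w$-space cubic coefficient explicitly via $D^2F$, which by the chain rule $F=f\circ g^{-1}$ brings in $(Dg)^{-1}D^2g$; (ii) pull back to $u$-space, where the $\sigma^3$ coefficient of $S_i(\sigma)(u)-\tilde S_i(\sigma)(u)$ turns out to be a universal multiple of $(D\lambda_i\cdot r_i)\,\big[(D\tilde g)^{-1}D^2\tilde g-(Dg)^{-1}D^2g\big](r_i,r_i)$, which is exactly the integrand defining $\Delta((f,g),(\tilde f,\tilde g))$ in~\eqref{eq:aggiunta}; (iii) control the remainder, which is $O(\sigma^4)$ on $V$ but over an interval of $u$-values of length $O(\diam V)$, contributing the $\diam V$ term in~\eqref{eq:8}. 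The same bookkeeping for the Rankine--Hugoniot speed gives the $\modulo{\Lambda_i-\tilde\Lambda_i}/\sigma^2$ part, with the same leading coefficient up to a universal constant.

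Concretely I would parametrize the $i$-shock curve by the $i$-th Riemann coordinate (legitimate under~\textbf{(H1)}, and note~\textbf{(H3)} is not needed here, only in Step~7), so that the parametrizations of $S_i$ and $\tilde S_i$ are pinned down identically; then differentiate the Rankine--Hugoniot relations $\tilde\Lambda_i\,(\tilde g(S_i(\sigma)(u))-\tilde g(u))=\tilde f(S_i(\sigma)(u))-\tilde f(u)$ three times at $\sigma=0$, using~\textbf{(H2)} in the form $(Dg)^{-1}Df=(D\tilde g)^{-1}D\tilde f$, to solve recursively for the Taylor coefficients of $S_i$ and $\Lambda_i$. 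Subtracting the analogous identities for $\tilde S_i$, the zeroth, first, and second order terms cancel by~\textbf{(H2)}, and the third-order term is computed to be the asserted expression. Finally one integrates: since $S_i(\sigma)(u)$ and $\tilde S_i(\sigma)(u)$ both lie in $V$ by the constraints in~\eqref{eq:44}, Taylor's theorem with the explicit cubic coefficient plus a uniformly bounded quartic remainder gives $\norma{S_i(\sigma)(u)-\tilde S_i(\sigma)(u)}\le \O\,(\Delta+\diam V)\,\modulo{\sigma}^3$, and dividing by $\modulo{\sigma}^3$ and taking the supremum over $i$, $u$, $\sigma$ with $\overline V\subset\Omega$ yields~\eqref{eq:8}. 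The analogous estimate for the speed difference is handled in the same step.

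The main obstacle I anticipate is the third-order differentiation of the Rankine--Hugoniot system and the algebraic simplification that collapses the resulting mess of $D^2f$, $D^2g$, $D^2\tilde f$, $D^2\tilde g$, $D\lambda_i$, $Dr_i$ terms down to the single clean expression $(D\lambda_i\cdot r_i)\big[(D\tilde g)^{-1}D^2\tilde g-(Dg)^{-1}D^2g\big](r_i,r_i)$ — the cancellations rely crucially on repeated use of~\textbf{(H2)} and its first derivative $D\big((Dg)^{-1}Df\big)=D\big((D\tilde g)^{-1}D\tilde f\big)$, and one must be careful that every term not involving the difference of the Hessians of $g$ and $\tilde g$ (in the appropriately preconditioned form) indeed drops out. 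A secondary technical point is the bookkeeping needed to guarantee that the choice of the same Riemann-coordinate parametrization for both curves does not itself introduce spurious lower-order discrepancies; this is where the normalization conventions fixed after~\textbf{(H3)} are used.
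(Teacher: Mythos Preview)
Your plan is essentially the paper's: Taylor expand the Rankine--Hugoniot relations to third order in $\sigma$, use~\textbf{(H2)} to cancel the lower-order terms, identify the leading coefficient with $\Delta$, and absorb the quartic remainder via $\modulo\sigma\leq\O\,\diam V$. Two remarks on execution.

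First, the paper works entirely in $u$-coordinates: it differentiates $\Lambda(\sigma)\big(g(S(\sigma))-g(u)\big)=f(S(\sigma))-f(u)$ and the eigenrelation $\lambda\,Dg(u)\,r=Df(u)\,r$ directly, never passing to $w=g(u)$. Your $w$-space detour is workable but adds an extra layer (two distinct pullbacks $g^{-1}$, $\tilde g^{-1}$) that the direct computation avoids. Also, in your ``concretely'' paragraph the displayed Rankine--Hugoniot relation should involve $\tilde S_i$, not $S_i$: it is $\tilde S_i(\sigma)(u)$ that satisfies the tilde system's jump condition.

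Second, and more substantively: the paper parametrizes by arc length ($\norma{\dot S}\equiv 1$, $\norma{r}=1$), and this choice does real work at the final step. After subtracting the two third-order identities one obtains an equation of the form $(\lambda I-A)\big(\dddot{\tilde S}(0)-\dddot S(0)\big)=\O\,\Delta$; projecting onto $l_j$ for $j\neq i$ controls those components of $\dddot{\tilde S}(0)-\dddot S(0)$, but gives no information on the $r_i$-component $\alpha_i$, since $r_i$ lies in the kernel of $\lambda I-A$. The arc-length normalization supplies the missing equation: differentiating $\dot S^T\ddot S\equiv 0$ once more and using that the first and second derivatives of $S$ and $\tilde S$ at $0$ agree yields $(\dddot S(0)-\dddot{\tilde S}(0))^T r_i=0$, whence $\alpha_i=-\sum_{j\neq i}\alpha_j\,r_j^T r_i=\O\,\Delta$. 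A linear parametrization such as $l_i(u)\cdot(S(\sigma)-u)=\sigma$ would give $\alpha_i=0$ directly and also works, but whatever choice you make you must spell this step out---it is not automatic, it is exactly the ``secondary technical point'' you flagged, and without it the bound on $\norma{S_i(\sigma)(u)-\tilde S_i(\sigma)(u)}/\modulo{\sigma}^3$ does not close.
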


\begin{proof}
  Using the ideas in~\cite[Theorem~5.2]{BressanLectureNotes}, we
  proceed obtaing higher order estimates.

  Note that at the zero-th order, by~\textbf{(H1)}, we have
  \begin{displaymath}
    S_i (0) (u) - \tilde S_i (0) (u) = 0
    \quad \mbox{ and } \quad
    \Lambda_i (u,0) - \tilde\Lambda_i (u,0) =0 \,.
  \end{displaymath}
  To simplify the notation in the computations below, we keep
  $i \in \{1, \ldots,n\}$ and $u \in \Omega$ fixed and set
  \begin{displaymath}
    \lambda_i (u) \to \lambda, \qquad
    \Lambda_i (u,\sigma) \to \Lambda (\sigma), \qquad
    S_i (\sigma) (u) \to S (\sigma)
  \end{displaymath}
  so that the Rankine--Hugoniot conditions now read
  \begin{equation}
    \label{eq:28}
    \Lambda (\sigma) \,
    \left(g \left(S (\sigma)\right) - g (u)\right)
    =
    f \left(S (\sigma)\right) - f (u) \,.
  \end{equation}
  A derivative in the direction $r = r_i (u)$ of the
  eigenvalues--eigenvector relation
  \begin{displaymath}
    \lambda_i (u) \, Dg (u) \, r_i (u) = Df (u) \, r_i (u)
    \quad \mbox{ shortened as } \quad
    \lambda \, Dg (u) \, r = Df (u) \, r
  \end{displaymath}
  yields:
  \begin{equation}
    \label{eq:29}
    (D\lambda \, r) \, Dg (u) \, r
    +
    \lambda \, D^2g (u) (r, r)
    +
    \lambda \, Dg (u) \, Dr \, r
    =
    D^2f (u) \, (r , r ) + Df (u) \, Dr  \, r
  \end{equation}
  where $D^2g (u) (\, \cdot \, , \, \cdot \, )$ and
  $D^2f (u) (\, \cdot \, , \, \cdot \, )$ are bilinear forms.  A
  further derivative in the direction $r$ yields:
  \begin{equation}
    \label{eq:37}
    \begin{array}{@{}r@{}}
      D^2\lambda (r , r ) \,  Dg (u) \, r
      +
      (D\lambda \, Dr  \, r)  \,  Dg (u) \, r
      +
      2 \, (D\lambda \, r) \, D^2g (u) (r , r )
      +
      2\, (D\lambda \, r) \, Dg (u) \, D r \, r
      \\[1pt]
      +
      \lambda \, D^3 g (u) \, (r , r , r)
      +
      3\, \lambda \, D^2g (u) (Dr \, r , r )
      +
      \lambda \, Dg (u) \, D^2 r  \, (r, r)
      +
      \lambda \, Dg (u) \, D r \, D r \, r
      \\[1pt]
      =
      D^3f (u) \, (r , r , r)
      +
      3\, D^2f (u) (Dr \, r, r)
      +
      Df (u) \, D^2 r \, (r, r)
      +
      Df (u) \, D r \, D r \, r \,,
    \end{array}
  \end{equation}
  here, $D^3 g (u) (\, \cdot \,, \, \cdot \, , \, \cdot \,)$ and
  $D^3 f (u) (\, \cdot \,, \, \cdot \, , \, \cdot \,)$ are trilinear
  forms, while $D^2\lambda ( \,\cdot\, , \,\cdot\, )$ and
  $D^2 r ( \,\cdot\, , \,\cdot\, )$ are bilinear ones.  A first
  differentiation of~\eqref{eq:28} with respect to $\sigma$, setting
  $S = S (\sigma)$, $\Lambda = \Lambda (\sigma)$ and denoting the
  differentiation with respect to $\sigma$ with a dot, yields:
  \begin{displaymath}
    \dot\Lambda \; \left(g (S) - g (u)\right)
    +
    \Lambda \, Dg (S) \, \dot S
    =
    Df (S) \, \dot S
  \end{displaymath}
  Setting $\sigma=0$, we obtain
  \begin{displaymath}
    \Lambda (0) \, Dg (u) \, \dot S (0) = Df (u) \, \dot S (0)
  \end{displaymath}
  which implies that $\dot S (0) = r$ and $\Lambda (0) = \lambda$. The
  same result holds for the \emph{``tilde''} system, hence
  \begin{displaymath}
    \dot S (0) - \dot {\tilde S} (0) = 0 \,.
  \end{displaymath}

  Computing the second derivative of~\eqref{eq:28}, we obtain:
  \begin{multline}
    \label{eq:35}
    \ddot\Lambda \left(g (S) - g (u)\right) + 2 \dot \Lambda \, Dg (S)
    \, \dot S + \Lambda \, D^2g (S) (\dot S, \dot S) + \Lambda \, Dg
    (S) \, \ddot S \\ = D^2f (S) (\dot S, \dot S) + Df (S) \, \ddot S
  \end{multline}
  and setting $\sigma=0$ we obtain
  \begin{equation}
    \label{eq:30}
    2\, \dot\Lambda (0) \, Dg (u) \, r
    +
    \lambda \, D^2g (u) (r,r)
    +
    \lambda \, Dg (u) \, \ddot S (0)
    =
    D^2f (u) (r,r) + Df (u) \ddot S (0) \,.
  \end{equation}
  Subtract now term by term \eqref{eq:29} from~\eqref{eq:30} and
  obtain
  \begin{eqnarray}
    \nonumber
    \left(2\, \dot\Lambda (0) - D\lambda\,r \right) \, Dg (u) \, r
    +
    \lambda\, Dg (u) \left(\ddot S (0) - Dr \, r\right)
    =
    Df (u) \, \left(\ddot S (0) - Dr \, r\right)
    \\
    \label{eq:32}
    (2\, \dot\Lambda (0) - D\lambda\,r) \, r
    =
    \left(Dg^{-1} (u) \, Df (u) - \lambda\right)
    \left(\ddot S (0) - Dr \, r\right)
  \end{eqnarray}
  Multiply now both terms in the latter expression~\eqref{eq:32} by
  the $i$-th left eigenvector $l = l_i$ of
  $A (u) = Dg^{-1} (u) \, Df (u)$ to obtain
  \begin{equation}
    \label{eq:31}
    \dot \Lambda (0) = \dfrac12 \, (D\lambda \, r) \,.
  \end{equation}
  We thus proved that
  \begin{displaymath}
    \dot\Lambda (0) - \dot{\tilde\Lambda} (0) = 0\,.
  \end{displaymath}
  The left hand side in~\eqref{eq:32} vanishes, implying that
  $\ddot S (0) - Dr \, r$ is a right eigenvector of
  $Dg^{-1} (u) \, Df (u)$ corresponding to the eigenvalue $\lambda$,
  so that, for a $\beta \in \reali$,
  \begin{equation}
    \label{eq:33}
    \ddot S (0) - Dr \, r = \beta \, r \,.
  \end{equation}
  We parameterize the Lax curves by means of the arc--length in the
  physical variable $u$, obtaining
  \begin{equation}
    \label{eq:42}
    \begin{array}{rcl}
      \sigma \mbox{ arc--length }
      \Rightarrow
      \norma{\dot S} = 1
      & \Rightarrow
      & \dot S^T \, \ddot S = 0
      \\
      \norma{r} = 1
      & \Rightarrow
      & r^T \, Dr \, r  = 0
    \end{array}
  \end{equation}
  so that multiplying both sides of~\eqref{eq:33} by
  $r^T = \dot S (0)^T$, we obtain $\beta=0$ and hence
  \begin{equation}
    \label{eq:34}
    \ddot S (0) = Dr \, r \,.
  \end{equation}
  Since we expressed $\ddot S$ by means of only the vector field $r$,
  we also obtained
  \begin{displaymath}
    \ddot S (0) - \ddot{\tilde S} (0) = 0\,.
  \end{displaymath}
  Differentiate now~\eqref{eq:35} with respect to $\sigma$:
  \begin{eqnarray*}
    &
    & \dddot\Lambda \left(g (S) - g (u)\right)
      +
      3 \, \ddot \Lambda \, Dg (S) \, \dot S
      +
      3 \, \dot \Lambda \, D^2g (S)  (\dot S, \dot S)
      +
      3 \, \dot \Lambda \, Dg (S) \, \ddot S
    \\
    &
    & +
      \Lambda \, D^3g (S) (\dot S,\dot S,\dot S)
      +
      3\, \Lambda \, D^2g (S) (\dot S, \ddot S)
      +
      \Lambda \, Dg (S) \, \dddot S
    \\
    & =
    & D^3f (S) (\dot S,\dot S,\dot S)
      +
      3 \, D^2f (S) (\dot S, \ddot S)
      +
      Df (S) \, \dddot S
  \end{eqnarray*}
  Compute the above terms in $\sigma = 0$, using~\eqref{eq:31}
  and~\eqref{eq:34}, to obtain
  \begin{eqnarray*}
    &
    & 3 \, \ddot\Lambda (0)\, Dg (u) \, r
      +
      \dfrac32 \, (D\lambda \, r) \, D^2g (u) (r,r)
      +
      \dfrac32 \, (D\lambda \, r) \, Dg (u) \, Dr \, r
    \\
    &
    & + \lambda\, D^3g (u) (r,r,r)
      +
      3 \, \lambda \, D^2g (u) (r, Dr \, r)
      +
      \lambda \, Dg (u) \dddot S (0)
    \\
    & =
    & D^3f (u) (r,r,r)
      +
      3 \, D^2f (u) (r, Dr \, r)
      +
      Df (u) \, \dddot S (0) \,.
  \end{eqnarray*}
  Subtract now the latter relation from~\eqref{eq:37} to obtain
  \begin{displaymath}
    \begin{array}{r}
      \left(
      D^2\lambda (r,r) + (D\lambda \, Dr \, r) - 3 \, \ddot\Lambda (0)
      \right)
      Dg (r) \, r
      +
      \frac12 \, (D\lambda \, r) \, D^2g (u) (r,r)
      \\[1pt]
      +
      \frac12 \, (D\lambda \, r) \, Dg (u) \, Dr \, r
      +
      \lambda \, Dg (u)
      \left(
      D^2r (r,r) + Dr \, Dr \, r - \dddot S (0)
      \right)
      \\[1pt]
      =
      Df (u)
      \left(
      D^2r (r,r) + Dr \, Dr \, r- \dddot S (0)
      \right) \,.
    \end{array}
  \end{displaymath}
  Multiply now on the left by $\left(Dg (u)\right)^{-1}$:
  \begin{equation}
    \label{eq:39}
    \begin{array}{r}
      \left(
      D^2\lambda (r,r) + (D\lambda \, Dr \, r) - 3 \, \ddot\Lambda (0)
      \right)
      r
      +
      \frac12 \, (D\lambda \, r) \, \left(Dg (u)\right)^{-1} \, D^2g (u) (r,r)
      \\[1pt]
      +
      \frac12 \, (D\lambda \, r) \, Dr \, r
      +
      \lambda
      \left(
      D^2r (r,r) + Dr \, Dr \, r - \dddot S (0)
      \right)
      \\[1pt]
      =
      \left(Dg (u)\right)^{-1} \,
      Df (u)
      \left(
      D^2r (r,r) + Dr \, Dr \, r- \dddot S (0)
      \right) \,.
    \end{array}
  \end{equation}
  The same computations leading to~\eqref{eq:39} can now be repeated
  with the \emph{``tilde''} system, yielding an expression analogous
  to~\eqref{eq:39} which, subtracted from~\eqref{eq:39}, yields:
  \begin{equation}
    \label{eq:41}
    \begin{array}{r}
      3\left( \ddot{\tilde\Lambda} (0) - \ddot\Lambda (0) \right) \, r
      -
      \frac12 \, (D\lambda \, r)
      \left(
      \left(D\tilde g (u)\right)^{-1} \, D^2 \tilde g (u)
      -
      \left(D g (u)\right)^{-1} \, D^2 g (u)
      \right) (r, r)
      \\
      =
      \left(
      \lambda\, \Id
      -
      \left(D g (u)\right)^{-1} \, Df (u)
      \right)
      \left(\dddot{\tilde S} (0) - \dddot S (0)\right)
    \end{array}
  \end{equation}
  and multiplying on the left by the $i$-th left eigenvector $l = l_i$ gives
  \begin{displaymath}
    3\left( \ddot{\tilde\Lambda} (0) - \ddot\Lambda (0) \right)
    =
    \frac12 \, (D\lambda \, r) \,
    l \,
    \left(
      \left(D\tilde g (u)\right)^{-1} \, D^2 \tilde g (u)
      -
      \left(D g (u)\right)^{-1} \, D^2 g (u)
    \right) (r, r) \,.
  \end{displaymath}
  This ensures that
  $\modulo{\ddot{\tilde\Lambda} (0) - \ddot\Lambda (0)} = \O \,
  \Delta \! \left((f,g), (\tilde f, \tilde g)\right)$, so that
  \begin{displaymath}
    \dfrac{\modulo{\Lambda_i (u,\sigma)-\tilde\Lambda_i (u,\sigma)}}{\sigma^2}
    \leq
    \O \left(
      \Delta \! \left((f,g), (\tilde f, \tilde g)\right)
      +
      \modulo{\sigma}
    \right)
  \end{displaymath}
  and moreover, by~\eqref{eq:41},
  \begin{displaymath}
    \left(\lambda \Id - \left(D g (u)\right)^{-1} \, Df (u)\right)
    \left(\dddot{\tilde S} (0) - \dddot S (0)\right)
    =
    \O \, \Delta \! \left((f,g), (\tilde f, \tilde g)\right) \,.
  \end{displaymath}
  Write
  $\dddot{\tilde S} (0) - \dddot S (0) = \sum_j \alpha_j \, r_j$.
  Then, multiplying the latter expression above by $l = l_j$ on the
  left, we have, for $j \neq i$,
  \begin{equation}
    \label{eq:48}
    \alpha_j = \O \, \Delta \! \left((f,g), (\tilde f, \tilde g)\right) \,.
  \end{equation}
  On the other hand, by the choice~\eqref{eq:42} of the parameterization
  \begin{displaymath}
    \begin{array}{rcccl}
      \ddot S^T \, \dot S
      & =
      & 0
      & \Rightarrow
      & \dddot S^T \dot S + \ddot S^T \, \ddot S = 0
      \\
      \ddot {\tilde S}^T \, \dot {\tilde S}
      & =
      & 0
      & \Rightarrow
      & \dddot {\tilde S}^T \dot {\tilde S} + \ddot {\tilde S}^T \, \ddot {\tilde S} = 0
    \end{array}
    \Rightarrow
    \left(\dddot S (0) - \dddot{\tilde S} (0)\right)^T \, r = 0
  \end{displaymath}
  which ensures that
  $\left(\sum_{j=i}^n \alpha_j \, r_j \right)^T \, r = 0$ and hence
  \begin{displaymath}
    \alpha_i = - \sum_{j \neq i} \alpha_j \, {r_j}^T r = \O \Delta \!
    \left((f,g), (\tilde f, \tilde g)\right)
  \end{displaymath}
  which, together with~\eqref{eq:48}, ensures that
  \begin{displaymath}
    \dfrac{\norma{S_i (\sigma) (u) - \tilde S_i (\sigma) (u)}}{\modulo{\sigma^3}}
    \leq
    \O \left(
      \Delta \! \left((f,g), (\tilde f, \tilde g)\right)
      +
      \modulo{\sigma}
    \right)
  \end{displaymath}
  completing the proof.
\end{proof}

\begin{proofof}{Proposition~\ref{prop:1}}
  Is a direct consequence of Lemma~\ref{lem:nuovo}.
\end{proofof}

\begin{proofof}{Theorem~\ref{thm:scalar}}
  Note that by~\textbf{(H1)} we can assume that
  $\frac{d~}{d u} \left(\frac{f' (u)}{g' (u)}\right) > 0$.

  We follow the same lines of the proof of Theorem~\ref{thm:1}, using
  as wave front tracking solutions those constructed
  in~\cite[Section~6.1]{BressanLectureNotes}. By~\textbf{(H1)}, if $U$
  is a single shock, respectively a rarefaction, then
  $\tilde {\mathcal{R}} (u_l,u_r)$ also consists of a shock,
  respectively a rarefaction.

  In the scalar case, we have now an estimate different
  from~\eqref{eq:1}. While rarefactions are treated entirely in the
  same way, there are no non--physical waves and in the case of shocks
  a finer estimates is available. Indeed, shock curves in the two
  equations in~\eqref{eq:3} coincide so that $U$ and
  $\tilde {\mathcal{R}} (u_l,u_r)$ differ only in the propagation
  speeds $\lambda$ and $\tilde\lambda$ of the shocks. To simplify the
  notation, set $u = u^\epsilon (t, y-)$ and
  $u+\sigma = u^\epsilon (t, y+)$, so that, by Rankine--Hugoniot
  conditions
  \begin{eqnarray*}
    \int_{-\hat\lambda}^{\hat\lambda}
    \modulo{U (t, y, \xi)
    -
    \left(\mathcal{R}
    \left(
    u^\epsilon (t, y-),
    u^\epsilon (t, y+)
    \right)\right) (\xi)}
    \d\xi
    & \!\! = \!\!
    & \int_{-\hat\lambda}^{\hat\lambda}
      \modulo{U (t, y, \xi)
      -
      \left(\mathcal{R}
      \left(
      u,
      u+\sigma
      \right)\right) (\xi)}
      \d\xi
    \\
    & \!\! \leq \!\!
    & \modulo{\lambda - \tilde \lambda}
      \modulo{\sigma} \,.
  \end{eqnarray*}
  We are thus lead to find a general bound on the quantity
  \begin{eqnarray*}
    &
    &  \modulo{\lambda - \tilde\lambda}
    \\
    & =
    & \modulo{
      \dfrac{f (u+\sigma) - f (u)}{g (u+\sigma) - g (u)}
      -
      \dfrac{\tilde f (u+\sigma) - \tilde f (u)}{\tilde g (u+\sigma) - \tilde g (u)}
      }
    \\
    & =
    & \modulo{
      \dfrac{\left(f (u+\sigma) - f (u)\right)
      \left(\tilde g (u+\sigma) - \tilde g (u)\right)
      -
      \left(\tilde f (u+\sigma) - \tilde f (u)\right)\left(g (u+\sigma) - g (u)\right)
      }{\left(g (u+\sigma) - g (u)\right) \left(\tilde g (u+\sigma) - \tilde g (u)\right)}
      }
    \\
    & \leq
    &
      \dfrac{1}{\sigma^2} \;
      \dfrac{1}{(\inf_\Omega \modulo{g'})(\inf_\Omega \modulo{\tilde g'})}
    \\
    &
    & \quad \times
      \modulo{
      \left(f (u+\sigma) - f (u)\right)
      \left(\tilde g (u+\sigma) - \tilde g (u)\right)
      -
      \left(\tilde f (u+\sigma) - \tilde f (u)\right)
      \left(g (u+\sigma) - g (u)\right)
      }
  \end{eqnarray*}
  Consider now the term in the latter modulus above and compute its
  derivatives using~\eqref{eq:36}:
  \begin{eqnarray*}
    k_u (\sigma)
    & =
    & \left(f (u+\sigma) - f (u)\right)
      \left(\tilde g (u+\sigma) - \tilde g (u)\right)
      -
      \left(\tilde f (u+\sigma) - \tilde f (u)\right)
      \left(g (u+\sigma) - g (u)\right)
    \\
    & =
    &\int_0^\sigma\int_0^\sigma
      \left(
      f' (u+\xi) \, \tilde g' (u+\eta)
      -
      \tilde f' (u+\xi) \, g' (u+\eta)
      \right) \d\xi \d\eta \,.
    \\
    k_u' (\sigma)
    & =
    & \int_0^\sigma
      \left(
      f' (u+\xi) \, \tilde g' (u+\sigma)
      -
      \tilde f' (u+\xi) \, g' (u+\sigma)
      \right) \d\xi
    \\
    &
    &\quad +
      \int_0^\sigma
      \left(
      f' (u+\sigma) \, \tilde g' (u+\eta)
      -
      \tilde f' (u+\sigma) \, g' (u+\eta)
      \right) \d\eta \,.
    \\
    k_u'' (\sigma)
    & =
    & \int_0^\sigma
      \left(
      f' (u+\xi) \, \tilde g'' (u+\sigma)
      -
      \tilde f' (u+\xi) \, g'' (u+\sigma)
      \right) \d\xi
    \\
    &
    &\quad +
      \int_0^\sigma
      \left(
      f'' (u+\sigma) \, \tilde g' (u+\eta)
      -
      \tilde f'' (u+\sigma) \, g' (u+\eta)
      \right) \d\eta \,.
    \\
    k_u''' (\sigma)
    & =
    & \int_0^\sigma
      \left(
      f' (u+\xi) \, \tilde g''' (u+\sigma)
      -
      \tilde f' (u+\xi) \, g''' (u+\sigma)
      \right) \d\xi
    \\
    &
    &\quad +
      \int_0^\sigma
      \left(
      f''' (u+\sigma) \, \tilde g' (u+\eta)
      -
      \tilde f''' (u+\sigma) \, g' (u+\eta)
      \right) \d\eta
    \\
    k_u'''' (\sigma)
    & =
    & \int_0^\sigma
      \left(
      f' (u+\xi) \, \tilde g'''' (u+\sigma)
      -
      \tilde f' (u+\xi) \, g'''' (u+\sigma)
      \right) \d\xi
    \\
    &
    &\quad +
      \int_0^\sigma
      \left(
      f'''' (u+\sigma) \, \tilde g' (u+\eta)
      -
      \tilde f'''' (u+\sigma) \, g' (u+\eta)
      \right) \d\eta
    \\
    &
    &\quad -
      2 \left(
      f'' (u+\sigma) \, \tilde g'' (u+\sigma)
      -
      \tilde f'' (u+\sigma) \, g'' (u+\sigma) \,.
      \right)
  \end{eqnarray*}
  Note that $k (0) = k' (0) = k'' (0) = k''' (0) = 0$, hence a Taylor
  expansion yields
  \begin{eqnarray*}
    \modulo{k_u (\sigma)}
    & \leq
    & \sup_w \sup_s \modulo{k''''_w (s)} \; \sigma^4
      \qquad \mbox{ where}
    \\
    \sup_w \sup_s \modulo{k''''_w (s)}
    & \leq
    & 2
      \norma{f'' \, \tilde g'' - \tilde f'' \, g''}_{\C0 (\Omega;\reali)}
    \\
    &
    & \quad + \O
      \left(
      \norma{f' - \tilde f'}_{\C0 (\Omega;\reali)}
      +
      \norma{g' - \tilde g'}_{\C0 (\Omega;\reali)}
      \right) \,
      \modulo{\sigma}
    \\
    &
    & \quad + \O
      \left(
      \norma{f'''' - \tilde f''''}_{\C0 (\Omega;\reali)}
      +
      \norma{g'''' - \tilde g''''}_{\C0 (\Omega;\reali)}
      \right) \,
      \modulo{\sigma} \,.
  \end{eqnarray*}
  Therefore,
  \begin{eqnarray*}
    &
    &\int_{-\hat\lambda}^{\hat\lambda}
      \modulo{U (t, y, \xi)
      -
      \left(\mathcal{R}
      \left(
      u,
      u+\sigma
      \right)\right) (\xi)}
      \d\xi
    \\
    & \leq
    & \dfrac{2}{(\inf_\Omega \modulo{g'})(\inf_\Omega \modulo{\tilde g'})}
      \,
      \norma{f'' \, \tilde g'' - \tilde f'' \, g''}_{\C0 (\Omega;\reali)}
      \,
      \modulo\sigma^3
    \\
    &
    & \quad + \O
      \left(
      \norma{f' - \tilde f'}_{\C0 (\Omega;\reali)}
      +
      \norma{g' - \tilde g'}_{\C0 (\Omega;\reali)}
      \right) \,
      \modulo{\sigma}^4
    \\
    &
    & \quad + \O
      \left(
      \norma{f'''' - \tilde f''''}_{\C0 (\Omega;\reali)}
      +
      \norma{g'''' - \tilde g''''}_{\C0 (\Omega;\reali)}
      \right) \,
      \modulo{\sigma}^4 \,,
  \end{eqnarray*}
  and the proof is completed as in~\textbf{Step~4} in the proof of
  Theorem~\ref{thm:1} using the Maximum Principle for scalar
  conservation laws.
\end{proofof}

\begin{proofof}{Lemma~\ref{lem:smooth}}
  Lemma~\ref{lem:gas} ensures the existence of the semigroups
  $S^{3\times3}$, while that of $S^{2\times2}$ follows
  from~\textbf{(p)} through well known arguments.

  By the properties of the SRSs~\cite{BressanLectureNotes}, it is
  sufficient to compare the solutions to Riemann problems
  for~\eqref{eq:11}, with constant entropy, and~\eqref{eq:15}. Since a
  constant entropy $\bar s$ factorizes in the third
  equation~\eqref{eq:11}, the Lax curves for the $2\times2$
  system~\eqref{eq:15} are the Lax curves for the $3\times3$
  system~\eqref{eq:11} corresponding to the first and third
  families. Therefore, an entropy solution to the Riemann Problem
  for~\eqref{eq:15} is an ntropy solution to the Riemann Problem
  for~\eqref{eq:11} provided the data has constant entropy $\bar
  s$. This concludes the proof.
\end{proofof}

\begin{proofof}{Theorem~\ref{thm:2}}
  Let $t \to \left(\rho^\epsilon (t), v^\epsilon (t)\right)$ be an
  $\epsilon$--approximate wave front tracking solution
  to~\eqref{eq:15},
  see~\cite[Definition~7.1]{BressanLectureNotes}. Since the $2\times2$
  system~\eqref{eq:11} satisfies~\textbf{(H3)}, we parametrize $i$-Lax
  curve through the variation in the $i$-th Riemann coordinate.

  Then, $t \to \left(\rho^\epsilon (t), v^\epsilon (t), \bar s\right)$
  is an $\epsilon$--approximate wave front tracking solution
  to~\eqref{eq:11}. Follow \textbf{Steps~1--5} in the proof of
  Theorem~\ref{thm:1} comparing
  $t \to \left(\rho^\epsilon (t), v^\epsilon (t), \bar s\right)$ to
  the orbit
  $t \to \tilde{S}^{3\times3}_t \left(\rho^\epsilon (0), v^\epsilon
    (0), \bar s\right)$
  and obtain~\eqref{eq:17}. Apply \textbf{Step~7} to
  system~\eqref{eq:11}. Therefore, the total size of negative waves in
  the $\epsilon$-approximate solution to~\eqref{eq:11} at time $t$ is
  bounded, as $\epsilon\to 0$, by a constant times the total size of
  negative waves in the initial datum, obtaining the estimate
  \begin{eqnarray*}
    &
    &    \int_{I_t}
      \norma{
      \left(S^{2\times2}_t (\rho_o, v_o) (x), \bar s\right)
      -
      \tilde S^{3\times3}_t (\rho_o, v_o, \bar s) (x)} \d{x}
    \\
    &   \leq
    &    C \, t \,
      \left(
      \mu_1^- \left((\rho_o, v_o);I_0\right)
      +
      \mu_2^- \left((\rho_o, v_o);I_0\right)
      \right)
      \left(\diam (\rho,v) (\mathcal{T}_t)\right)^2 \,.
  \end{eqnarray*}
  Recall now~\cite[Theorem~3.12]{BianchiniColomboMonti}, which extends
  the classical result~\cite{GlimmLax}, that ensures the estimate
  \begin{displaymath}
    \diam (\rho,v) (\mathcal{T}_t)
    \leq
    \O \diam (\rho_o,v_o) (I_0) \,,
  \end{displaymath}
  completing the proof.
\end{proofof}

\noindent\textbf{Acknowledgment:} The present work was supported by
the PRIN~2015 project \emph{Hyperbolic Systems of Conservation Laws
  and Fluid Dynamics: Analysis and Applications}, by the GNAMPA~2017
project \emph{Conservation Laws: from Theory to Technology} and by the
\emph{Simons -- Foundation grant 346300} together with the
\emph{Polish Government MNiSW 2015-2019 matching fund}.

{\small

}

\end{document}